\theoremstyle{plain}
\theoremstyle{definition}
\numberwithin{equation}{section}
\newtheorem{definition}{Definition}[section]
\newtheorem{theorem}[definition]{Theorem}
\newtheorem{lemma}[definition]{Lemma}
\renewcommand{\theta}{\vartheta}
\renewcommand{\phi}{\varphi}
\renewcommand{\epsilon}{\varepsilon}
\newcommand{\N}{\mathbb N}
\newcommand{\C}{\mathbb C}
\newcommand{\F}{\mathbb F}
\newcommand{\zw}[2]{\textbf{A.#1 #2.}}
\begin{document}
\title[Decomposition Of Bilinear Forms]{Decomposition of bilinear forms as sums of bounded forms}
\author{Mohamed Elmursi}
\address{Sohag University, Mathematics Institut, Egypt}
\email{m\_elmursi@yahoo.com}
%\date{\today}
%\subjclass[2010]{46L05 (Primary); 46L80, 19Kxx, 46L65 (Secondary)}
\keywords{tensor products, bilinear forms, trace class, finite rank operators, compact}
\thanks{\scriptsize{This work has been done in the context of the author's PhD project at the University of Muenster.}}

\begin{abstract}
The problem of decomposition of bilinear forms which satisfy a certain condition has been studied by many authors by example in \cite{H08}: Let $H$ and $K$ be Hilbert spaces and let $A,C \in B(H),B,D\in B(K)$. Assume that $u:H\times K\rightarrow \C$ a bilinear form satisfies 
\[ 
\left|u(x,y)\right|\leq\left\|Ax\right\|\ \left\|By\right\|+\left\|Cx\right\|\left\|Dy\right\| 
\]
for all $ x\in H$ and $y\in K$. Then u can be decomposed as a sum of two bilinear forms 
\[
u=u_1+u_2 
\]
where
\[
\left|u_1(x,y)\right|\leq \left\|Ax\right\|\ \left\|By\right\|,
\left|u_2(x,y)\right|\leq \left\|Cx\right\|\left\|Dy\right\|, \forall x\in H,y\in K.
\] U.Haagerup conjectured that an analogous decomposition as a sum of bounded bilinear forms is not always possible for more than two terms. The aim of current paper is to investigate this problem. In the finite dimensional case, we give a necessary and sufficient criterion for such a decomposition. Finally, we use this criterion to give an example of a sesquilinear form $u$, even on a two-dimensional Hilbert space, which is majorized by the sum of the moduli of three bounded forms $b_1,b_2$ and $b_3$, but can not be decomposed as a sum of three sesquilinear forms $u_i$ where each $u_i$ is majorized by the corresponding $\left|b_i\right|$.

\end{abstract}

\maketitle

\section*{Introduction}

\noindent
A bilinear form on a vector space $V$ is a bilinear mapping $V\times V\rightarrow \F$, where $\F$ is the field of scalars. That is, a bilinear form is a function $B:V\times V\rightarrow \F$ which is a linear in each argument separately. When $\F$ is the field of complex numbers $\C$, one is often more interested in sesquilinear forms, which are similar to bilinear forms but are conjugate linear in one argument. We focus here on the bounded ones. A bilinear form on a normed vector space is bounded if there is a constant $C$ such that for all $u,v\in V$
\[
\left|B(u,v)\right|\leq C\left\|u\right\|\left\|v\right\|.
\]
Let $E$ and $F$ be real or complex vector spaces. In several places in the literature one meets the following situation. One is given a bilinear form $u:E\times F\rightarrow \C$ which can be majorized by the sum of the absolute values of two bounded forms $b_1$ and $b_2$. One then wants to show that $u$ can be decomposed as a sum $u=u_1+u_2$ with $\left|u_1\right|\leq \left|b_1\right|$ and $\left|u_2\right|\leq \left|b_2\right|$. Pisier and Shlyakhtenko \cite{P02} proved such a result for completely bounded forms on exact operator spaces $E\subseteq A$ and $F\subseteq B$ sitting in C*-algebras $A$ and $B$. Let $f_1$,$f_2$ be states on $A$ and $g_1,g_2$ be states on $B$ such that for all $a\in E$ and $b\in F$,
\[
\left|u(a,b)\right|\leq \left\|u\right\|_{ER}(f_1(aa^*)^\frac{1}{2}g_1(b^*b)^\frac{1}{2}+f_2(a^*a)^\frac{1}{2}g_2(bb^*)^\frac{1}{2}).
\]
Then $u$ can be decomposed as
\[
u=u_1+u_2,
\]
where $u_1$ and $u_2$ are bilinear forms satisfying the following inequalities, for all $a\in A$ and $b\in B$:
\begin{equation}
\left|u_1(a,b)\right|\leq\left\|u\right\|_{ER}f_1(aa^*)^\frac{1}{2}g_1(b^*b)^\frac{1}{2}\label{one}
\end{equation}
\begin{equation}
\left|u_2(a,b)\right|\leq \left\|u\right\|_{ER}f_2(a^*a)^\frac{1}{2}g_2(bb^*)^\frac{1}{2}\label{two}.
\end{equation}
In particular,
\[
\left\|u_1\right\|_{cb}\leq \left\|u\right\|_{ER}\text{ and }\left\|u_2^t\right\|_{cb}\leq \left\|u\right\|_{ER},
\]
where $u_2^t(b,a):=u_2(a,b)$, for all $a\in E$ and $b\in F$. Using a similar argument a strengthened version of this result was proved by Xu \cite{X06} for ordinary bilinear forms. Let $H$ and $K$ be Hilbert spaces and let $A,C \in B(H),B,D\in B(K)$. Assume that $u:H\times K\rightarrow \C$ is a bilinear form that satisfies 
\[ 
\left|u(x,y)\right|\leq\left\|Ax\right\|\ \left\|By\right\|+\left\|Cx\right\|\left\|Dy\right\| 
\]
for all $ x\in H$ and $y\in K$. Then u can be decomposed as a sum of two bilinear forms 
\[
u=u_1+u_2 
\]
where
\[
\left|u_1(x,y)\right|\leq \left\|Ax\right\|\ \left\|By\right\|,
\left|u_2(x,y)\right|\leq \left\|Cx\right\|\left\|Dy\right\|, \forall x\in H,y\in K.
\]
The proof there was merely sketched. Later Haagerup-Musat needed the stronger version for bounded bilinear forms on operator spaces \cite{H08}. U.Haa\-ge\-rup conjectured that an analogous decomposition as a sum of bounded bilinear forms is not always possible for more than two terms. It is the aim of the present paper to analyze this problem.

The article is organized as follows. In \textbf{section 1}, we come to the main subject of this paper. We study the problem of decomposition of bounded bilinear forms as in Xu's result. The proof for such a decomposition that we give (following Haagerup) depends on the isomorphism between the projective tensor product $H\hat{\otimes}_\pi K$ where $H$ and $K$ are Hilbert spaces, and the space of trace class operators from the conjugate Hilbert space $\bar{H}$ into $K$ \cite{D93}. First we give a complete proof of Xu's result. Then we consider the case of a bilinear form which is bounded by the sum of three terms. The principal problem considered in this section is the following question: 
Assume that a bilinear form $u$ satisfies the following estimate:
\[
\left|u(x,y)\right|\leq\left\|Ax\right\|\ \left\|By\right\|+\left\|Cx\right\|\left\|Dy\right\|+\left\|Ex\right\|\left\|Fy\right\|
\]
for fixed $A,C,E\in B(H),B,D,F\in B(K)$ and $x\in H,y\in K$. Can $u$ then be decomposed as a sum

\[
u=u_1+u_2+u_3,
\]
such that the forms, $u_1,u_2$ and $u_3$ satisfy the estimates
\[
\left|u_1(x,y)\right|\leq \left\|Ax\right\|\ \left\|By\right\|,\left|u_2(x,y)\right|\leq \left\|Cx\right\|\left\|Dy\right\|, \left|u_3(x,y)\right|\leq \left\|Ex\right\|\left\|Fy\right\|?.
\]
As a first step we find a very special technical condition that allows us to generalize our previous argument to three terms.
\textbf{Section 2} takes a systematic look at the question of decomposing into $n$ bounded terms. Restricting to the finite dimensional case, we give a necessary and sufficient criterion for such a decomposition. The criterion and its proof uses the correspodence between bounded operators on a Hilbert space and sesquilinear forms as well as the trace duality theorem. The proof also uses the Hahn-Banach Theorem applied to convex hulls of certain sets of finite rank operators (the Hahn-Banach theorem had also been used in the proof of Theorem 2.1, the main theorem of this section).
\textbf{Section 3} is the last setion of this paper. Here we use the criterion established in Section 2 to give a counterexample to the
decomposability into three terms. First we give a useful lemma concerning a positive finite rank operator on a finite dimensional Hilbert space. We are then in a position to construct an example of a sesquilinear form $u$, even on a two-dimensional Hilbert space, which is majorized by the sum of the moduli of three bounded forms $b_1,b_2$ and $b_3$, but can not be decomposed as a sum of three sesquilinear forms $u_i$, where each $u_i$ is majorized by the corresponding $\left|b_i\right|$. The counterexample depends on a suitable choice of operators without common eigenvectors.
%\end{document}
%Some remarks on the \textbf{notation}: By $H$ or $H_1$, $H_2$, \ldots we denote \emph{Hilbert spaces} on which $\scp{.}{.}$ is the \emph{inner product}, linear in the first variable, anti-linear in the second. The algebra of \emph{bounded linear operators} on $H$ is denoted by $\LH$. The \emph{natural numbers} $1,2,\ldots$ are denoted by $\N$, whereas $\N_0$ is $\N\cup\{0\}$. For *-homomorphisms between $C^*$-algebras, we mostly write \emph{homomorphism}. By $\langle a_1,a_2,\ldots\rangle\lhd A$ we mean the smallest \emph{ideal} in a $C^*$-algebra $A$ generated by the elements $a_1,a_2,\ldots \in A$. We use the convention to write $x^{-n}:=(x^*)^n$ for $n\in\N_0$, so the terms $x^k$ are well-defined for any $k\in\Z$. We denote the \emph{compact operators} on a separable, infinite-dimensional Hilbert space by $\K$. The \emph{Toeplitz algebra} is denoted by $\Toepl$. 

\section{Decomposition of bilinear forms on Hilbert spaces} \label{SectTwist}

\noindent

In this section, we show how to decompose a bilinear form into two terms by using the isomorphism between the projective tensor product $H\hat{\otimes}_\pi K$ and the space $B_1(\bar{H},K)$ of trace-class operators from $\bar{H}$ into $K$, see for instance \cite{D93}. Next we consider a problem of decomposing a bilinear form to three terms and suggest a solution to the problem by putting a condition on the polar decomposition, related in particular to the ``SVD'' i.e. singular value decomposition, see \cite{W04} and using the same technique as in our main statement.

\subsection{Decomposition of bilinear forms into two terms}
\noindent

In this subsection, we explain how to decompose a bilinear form which satisfies the condition $(*)$ into a sum of two bilinear forms satisfying certain boundedness conditions. We will start with the following theorem which gives the isometric isomorphism between the projective tensor product and the space of trace class operators, see \cite{D93}.

\begin{theorem} 
The map
\[
J:\acute{H}\otimes_\pi H\rightarrow B_1(H)
\]
\[
x'\otimes y\rightarrow x'\underline{\otimes}y
\]
gives an isometric isomorphism
\end{theorem}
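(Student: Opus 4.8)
The plan is to check that $J$ extends to a well-defined linear contraction, that it is injective, and that it admits a contractive inverse furnished by the singular value decomposition; together these give that $J$ is a surjective isometry. The starting computation is that a single rank-one operator has the expected trace norm. Writing $x'\underline{\otimes}y$ for the operator $z\mapsto\scp{z}{x}y$ (with $\norm{x'}=\norm{x}$ in the conjugate space $\acute H$), one has $(x'\underline{\otimes}y)^*(x'\underline{\otimes}y)=\norm{y}^2\,\scp{\cdot}{x}\,x$, whose single nonzero eigenvalue is $\norm{x}^2\norm{y}^2$; hence the only singular value of $x'\underline{\otimes}y$ is $\norm{x}\norm{y}$, so $\norm{x'\underline{\otimes}y}_1=\norm{x'}\,\norm{y}$. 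Since $(x',y)\mapsto x'\underline{\otimes}y$ is bilinear on $\acute H\times H$, it factors through the algebraic tensor product and defines a linear map $J$.

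Next I would establish contractivity. For any finite representation $u=\sum_i x_i'\otimes y_i$, the triangle inequality for $\norm{\cdot}_1$ together with the rank-one computation gives $\norm{J(u)}_1\le\sum_i\norm{x_i'}\,\norm{y_i}$. Taking the infimum over all such representations yields $\norm{J(u)}_1\le\norm{u}_\pi$, so $J$ is contractive on the algebraic tensor product and extends to a norm-one map on the completion $\acute H\otimes_\pi H$.

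For the reverse inequality I would combine injectivity with the singular value decomposition. Injectivity on the algebraic tensor product is the standard fact: rewriting $u=\sum_i x_i'\otimes y_i$ with the $y_i$ linearly independent, $J(u)=0$ forces $\scp{z}{x_i}=0$ for all $z$ and all $i$, hence each $x_i=0$ and $u=0$. Now, for $u$ in the algebraic tensor product, $T=J(u)$ is a finite-rank operator; its singular value decomposition $T=\sum_{k=1}^{r}s_k\,e_k'\underline{\otimes}f_k$, with $\{e_k\},\{f_k\}$ orthonormal and $s_k>0$, produces $u'=\sum_k s_k\,e_k'\otimes f_k$ with $J(u')=T=J(u)$. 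By injectivity $u'=u$, so $\norm{u}_\pi\le\sum_k s_k=\norm{T}_1=\norm{J(u)}_1$. Combined with contractivity, this shows $J$ is isometric on the algebraic tensor product, and therefore extends to an isometry on $\acute H\otimes_\pi H$. Surjectivity then follows formally: the range of an isometry defined on a complete space is closed, it contains every finite-rank operator by the display above, and the finite-rank operators are dense in $(B_1(H),\norm{\cdot}_1)$, so the range is all of $B_1(H)$.

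I expect the main obstacle to be the reverse norm inequality, that is, upgrading $J$ from a contraction to an isometry. This is precisely the step that requires the singular value decomposition, which is the only device producing, for a given trace-class operator, a representing tensor whose projective norm is controlled from above by the trace norm. By contrast, the injectivity argument and the density of finite-rank operators are routine, serving only to transport the equality of norms from the algebraic tensor product to its completion and to promote the isometry to a genuine isomorphism.
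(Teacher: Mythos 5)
Your proof is correct, but it does not follow the paper, because the paper contains no proof of this theorem at all: it is quoted from Defant--Floret \cite{D93}, and the appendix (A.5--A.8) merely sketches a route through nuclear operators, where $J:\acute{X}\otimes_\pi Y\rightarrow\aleph(X,Y)$ is surjective and contractive essentially by definition of the nuclear norm $N(T)$, and one then identifies $\aleph(H,H)$ with $B_1(H)$ isometrically --- without proving that identification. Your argument is a self-contained alternative that supplies exactly the step the paper's sketch leaves open, namely the equality of the nuclear/projective norm with the trace norm. Your contraction step ($\left\|J(u)\right\|_1\leq\left\|u\right\|_\pi$ via the triangle inequality and the rank-one computation) coincides with what the quotient description of the nuclear norm gives for free; the genuine content is the reverse inequality, where the singular value decomposition $T=\sum_{k=1}^r s_k\, e_k'\underline{\otimes}f_k$ exhibits a representing tensor $u'=\sum_k s_k\, e_k'\otimes f_k$ of projective norm at most $\sum_k s_k=\left\|T\right\|_1$, and injectivity of $J$ on the algebraic tensor product is precisely what lets you identify $u'$ with $u$ and conclude $\left\|u\right\|_\pi\leq\left\|J(u)\right\|_1$. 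You also handle correctly a point that is easy to miss: injectivity of the continuous extension to the completion is not automatic for a mere injective contraction, but your isometry makes it so, and surjectivity then follows from closed range plus density of finite-rank operators in $B_1(H)$. It is worth noting that your SVD device is consonant with how the paper actually uses the theorem downstream: the proof of Lemma 1.3 invokes the Schmidt decomposition (citing \cite{C00}, Th.\ 18.13) to write $w=\sum_i\xi_i\otimes\eta_i$ with $\left\|w\right\|_\pi=\sum_i\left\|\xi_i\right\|^2=\sum_i\left\|\eta_i\right\|^2$, which is exactly the isometric content you established.
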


\begin{theorem}(\cite{X06})\label{ch2}

Let $H$ and $K$ be Hilbert spaces and let $A,C \in B(H),B,D\in B(K)$. Assume that a bilinear form $u:H\times K\rightarrow \C$ satisfies 
\[ 
\left|u(x,y)\right|\leq\left\|Ax\right\|\ \left\|By\right\|+\left\|Cx\right\|\left\|Dy\right\|(*)
\]
for all $ x\in H$ and $y\in K$. Then u can be decomposed as a sum of two bilinear forms 
\[
 u=u_1+u_2 
\]
 where 
\[
\left|u_1(x,y)\right|\leq \left\|Ax\right\|\left\|By\right\|,
\left|u_2(x,y)\right|\leq \left\|Cx\right\|\left\|Dy\right\|, x\in H,y\in K.
\]
\end{theorem}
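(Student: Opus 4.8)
The plan is to dualize the problem and attack it with Hahn--Banach separation together with the trace-class picture of Theorem~1.1; throughout I suppress the complex conjugations and argue as if all operators were real, the general case being identical up to the natural conjugations. First I would record the elementary factorization principle behind a single bound: a form $w$ satisfies $\left|w(x,y)\right|\leq\left\|Ax\right\|\left\|By\right\|$ for all $x,y$ if and only if $w$ factors through $A$ and $B$ via a contraction, i.e. its operator has the form $A^{T}SB$ with $\left\|S\right\|\leq1$. Writing $U$ for the operator attached to $u$, the desired decomposition $u=u_{1}+u_{2}$ is then exactly the assertion that $U$ lies in the set $\mathcal{K}=\{A^{T}S_{1}B+C^{T}S_{2}D:\left\|S_{1}\right\|\leq1,\ \left\|S_{2}\right\|\leq1\}$. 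In the finite-dimensional (or trace-class) setting $\mathcal{K}$ is convex, balanced and compact, so the whole statement reduces to showing $U\in\mathcal{K}$.

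Second, I would argue by contradiction. If $U\notin\mathcal{K}$, Hahn--Banach separation in the Hilbert--Schmidt (trace) duality produces an operator $W$ with $\mathrm{Re}\,\mathrm{tr}(W^{*}U)>\sup_{X\in\mathcal{K}}\mathrm{Re}\,\mathrm{tr}(W^{*}X)$. The supremum splits term by term via the basic trace-duality identity $\sup_{\left\|S\right\|\leq1}\mathrm{Re}\,\mathrm{tr}(W^{*}A^{T}SB)=\left\|AWB^{*}\right\|_{1}$ (and likewise for the $C,D$ term), so separation yields a single trace-class operator $W$ with
\[
\mathrm{Re}\,\mathrm{tr}(W^{*}U)>\left\|AWB^{*}\right\|_{1}+\left\|CWD^{*}\right\|_{1}.
\]
It is precisely here that the projective tensor / trace-class isomorphism of Theorem~1.1 does its work: it identifies the separating functional with such a $W$ and converts the geometric claim into a trace inequality.

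The crux is therefore to contradict this by establishing, for every trace-class $W$, the estimate
\[
\left|\mathrm{tr}(W^{*}U)\right|\leq\left\|AWB^{*}\right\|_{1}+\left\|CWD^{*}\right\|_{1}
\]
from the hypothesis $(*)$ alone. My plan is to feed $(*)$ into a singular value decomposition: writing $W=\sum_{k}\sigma_{k}\,\xi_{k}\eta_{k}^{*}$ and setting $x_{k}=\bar{\xi}_{k},\ y_{k}=\eta_{k}$ gives $\mathrm{tr}(W^{*}U)=\sum_{k}\sigma_{k}\,u(x_{k},y_{k})$, whereupon $(*)$ bounds each summand by $\left\|Ax_{k}\right\|\left\|By_{k}\right\|+\left\|Cx_{k}\right\|\left\|Dy_{k}\right\|$. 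The hard part will be that the raw singular value decomposition of $W$ over-estimates: the triangle inequality only yields $\left\|AWB^{*}\right\|_{1}\leq\sum_{k}\sigma_{k}\left\|Ax_{k}\right\|\left\|By_{k}\right\|$, which points the wrong way, and the same sum would have to be efficient for the $C,D$ pair simultaneously. I expect this simultaneous alignment to be the main obstacle, and the one place where two terms are genuinely used. To overcome it I would first normalize through the polar decompositions $A=\alpha P,\ C=\gamma P$ with $P=(A^{*}A+C^{*}C)^{1/2}$ and $B=\beta Q,\ D=\delta Q$ with $Q=(B^{*}B+D^{*}D)^{1/2}$, so that $\binom{\alpha}{\gamma}$ and $\binom{\beta}{\delta}$ are isometries and $u$ becomes a contractive form in the variables $Px,Qy$; the isometry relations $\alpha^{*}\alpha+\gamma^{*}\gamma=1$ and $\beta^{*}\beta+\delta^{*}\delta=1$ furnish exactly the rigidity needed to choose a decomposition of $W$ compatible with both pairs and make the two sides meet.

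Finally, once this trace inequality is in hand it contradicts the strict separation inequality, forcing $U\in\mathcal{K}$, which is the decomposition $u=u_{1}+u_{2}$ with the required bounds. I would stress that the whole argument is the natural two-term refinement of the trivial one-term case (where $(*)$ has a single summand and $U=A^{T}SB$ outright), and that the single step which refuses to generalize is the simultaneous singular value alignment in the crux inequality -- in keeping with the breakdown of the analogous three-term statement established later in the paper.
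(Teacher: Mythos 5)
Your dual framework is sound, and it is in fact essentially the strategy the paper itself deploys later (Theorem \ref{ch5} in Section 2) for the $n$-term criterion: pass from the form $u$ to an operator $U$ by trace duality, characterize domination by a single term $\left\|Ax\right\|\left\|By\right\|$ as a contractive factorization, and, if the decomposition fails, separate $U$ from the convex set $\mathcal{K}$ by a trace-class operator $W$; the identity $\sup_{\left\|S\right\|\leq 1}\mathrm{Re}\,\mathrm{tr}(W^{*}A^{T}SB)=\left\|AWB^{*}\right\|_{1}$ is also correct. The genuine gap is exactly at the step you yourself flag as ``the hard part'': the crux inequality $\left|\mathrm{tr}(W^{*}U)\right|\leq\left\|AWB^{*}\right\|_{1}+\left\|CWD^{*}\right\|_{1}$ is asserted but never proved, and your proposed mechanism does not supply it. Writing $A=\alpha P$, $C=\gamma P$ with $P=(A^{*}A+C^{*}C)^{1/2}$ and $B=\beta Q$, $D=\delta Q$ with $Q=(B^{*}B+D^{*}D)^{1/2}$, the isometry relations give (via Cauchy--Schwarz) only the single-term bound $\left|u(x,y)\right|\leq\left\|Px\right\|\left\|Qy\right\|$, i.e.\ a factorization $U=Q^{*}SP$ with $\left\|S\right\|\leq1$. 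This collapses the two-term structure rather than splitting it: it yields $\left|\mathrm{tr}(W^{*}U)\right|\leq\left\|PWQ^{*}\right\|_{1}$, and $\left\|PWQ^{*}\right\|_{1}$ is \emph{not} dominated by $\left\|AWB^{*}\right\|_{1}+\left\|CWD^{*}\right\|_{1}$ in general (expanding $PWQ^{*}$ through $\alpha^{*}\alpha+\gamma^{*}\gamma=1$ and $\beta^{*}\beta+\delta^{*}\delta=1$ produces uncontrolled cross terms $\alpha^{*}(\alpha\cdot\beta^{*})\beta$, etc.). So after your separation step there is nothing to contradict.

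What is actually needed is a representation $W=\sum_{k}x_{k}\underline{\otimes}y_{k}$ that is \emph{simultaneously} near-optimal for both weighted trace norms, i.e.\ $\sum_{k}\left\|Ax_{k}\right\|\left\|By_{k}\right\|\leq\left\|AWB^{*}\right\|_{1}$ and $\sum_{k}\left\|Cx_{k}\right\|\left\|Dy_{k}\right\|\leq\left\|CWD^{*}\right\|_{1}$, and constructing it is the entire content of the paper's Lemma \ref{chapter}. The paper first reduces (Cases Two and Three of Theorem \ref{ch2}, using the regularization $(A^{*}A+\epsilon I)^{1/2}$ and a Banach--Alaoglu limit) to $A=B=I$ with $C,D$ invertible; then it takes Schmidt decompositions $w=\sum_{i}\xi_{i}\otimes\eta_{i}$ and $(C\otimes D)w=\sum_{i}\rho_{i}\otimes\sigma_{i}$, relates them by the invertible transition matrix $\alpha$ (with $C\xi_{i}=\sum_{j}\alpha_{ij}\rho_{j}$, $D\eta_{i}=\sum_{j}\beta_{ij}\sigma_{j}$, $\beta=(\alpha^{t})^{-1}$), and uses the SVD $\alpha=UdV$ to rotate both bases into a single representation $w=\sum_{i}\hat{\xi}_{i}\otimes\hat{\eta}_{i}$ satisfying $C\hat{\xi}_{i}=d_{i}\hat{\rho}_{i}$ and $D\hat{\eta}_{i}=d_{i}^{-1}\hat{\sigma}_{i}$; the singular values cancel in the products and Cauchy--Schwarz then gives both bounds at once. (Note also that once this estimate is in hand the paper does not need separation at all: it defines a norm-one functional on the span of the vectors $(x\otimes y,\,Cx\otimes Dy)$ inside the $\oplus_{1}$-sum and extends it by Hahn--Banach, reading off $u_{1}$ and $u_{2}$ directly, which sidesteps any closedness or compactness question about $\mathcal{K}$.) Without this alignment construction, or an equivalent substitute, your proposal is incomplete at its decisive step.
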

\begin{proof}
We consider three cases to prove the claim 
\begin{enumerate}
\item \textbf{Case One}:
We suppose that A=B=I and C,D are invertible operators.

\item \textbf{Case Two}: We suppose that all the operators A,B,C and D are invertible.
We set
\[
v(x,y):=u(A^{-1}x,B^{-1}y).
\]
We can now apply Case One to the bilinear form $v$. We have 
\[
\left|v(x,y)\right|\leq\left\|x\right\|\left\|y\right\|+\left\|CA^{-1}x\right\|\left\|DB^{-1}y\right\|.
\]
Thus by Case One
\[
v=v_1+v_2,
\]
where
\[
\left|v_1(x,y)\right|\leq\left\|x\right\|\left\|y\right\| , \left|v_2(x,y)\right|\leq\left\|CA^{-1}x\right\|\left\|DB^{-1}y\right\|.
\]
Setting
\[
u_i(x,y):=v_i(Ax,By),
\]
Case Two can be reduced to Case One.

\item \textbf{Case Three}: We consider the general case that $A$ is any linear operator in B(H) then $A^*A$ is positive i.e. $A^*A\geq0$. If $\epsilon>0$ is any positive number, then $\epsilon I\geq0$ and $K=A^*A+\epsilon I\geq0$. Clearly $K\geq\epsilon I$. Therefore $K$ is invertible. 
In fact, $sp(K)\subseteq[\epsilon,\infty)$. This means $0\notin sp(K)$ which is equivalent to $K$ being invertible. Thus $A^*A+\epsilon I$ is invertible for $\epsilon>0$. So
\[
A(\epsilon):=(A^*A+ \epsilon I)^\frac{1}{2}
\]
is invertible.
Set
\[
A(\epsilon):=(A^*A+\epsilon1)^\frac{1}{2}, B(\epsilon):=(B^*B+\epsilon1)^\frac{1}{2} 
\]
\[
C(\epsilon):=(C^*C+\epsilon1)^\frac{1}{2} ,D(\epsilon):=(D^*D+\epsilon1)^\frac{1}{2}
\]
Now from the polar decomposition, we can represent any operator $A\in B(H)$ by $A=u\left|A\right|$ where $u$ is a partial isometry on $H$, so
\[
\left\|Ax\right\|=\left\|u\left|A\right|x\right\|\leq\left\|\left|A\right|x\right\|,\text{ }x\in H.
\] 
Since
\[
0\leq\left|A\right|=(A^*A)^\frac{1}{2}\leq(A^*A+\epsilon I)^\frac{1}{2},
\]
it follows that
\[
\left\|\left|A\right| x|\right|\leq\left\|A(\epsilon) x\right\|,\;x\in H.
\]
Hence, we have
\[
\left|u(x,y)\right|\leq\left\|A(\epsilon)x\right\|\ \left\|B(\epsilon)y\right\|+\left\|C(\epsilon)x\right\|\left\|D(\epsilon)y\right\|
\]
Then from Case Two
\begin{equation}\label{limit}
u=u^{\epsilon}_1+u^{\epsilon}_2,
\end{equation}
such that 
\begin{align}\label{estimation}
\left|u_1^\epsilon(x,y)\right|&\leq \left\|A(\epsilon)x\right\|\ \left\|B(\epsilon)y\right\|,\\\label{estim}
\left|u_2^\epsilon(x,y)\right|&\leq \left\|C(\epsilon)x\right\|\left\|D(\epsilon)y\right\|.
\end{align}
Take $0<\epsilon<1$, then $A(\epsilon)\leq A(1)$. Therefore,
\[
\left\|A(\epsilon)\right\|\leq\left\|A(1)\right\|.
\]

In fact, the norms of $\left\|A(\epsilon)\right\|$ are uniformly bounded for all $0<\epsilon<1$. So from the estimations in \eqref{estimation} and \eqref{estim} 
\begin{align}\label{help}
\left\|u_1^{\epsilon}\right\|\leq N_1:=\left\|A(1)\right\|\left\|B(1)\right\|,\\\label{halp}
\left\|u_2^{\epsilon}\right\|\leq N_2:=\left\|C(1)\right\|\left\|D(1)\right\|.
\end{align}
We know from the universal property of projective tensor product (see Proposition 1.4 from \cite{R02}) that , 
\[
Bil(H,K)=(H\otimes_\pi K)^{'},
\]
so, there is $w\in (H\otimes_\pi K)'$ such that 
\[
u(x,y)=w(x\otimes y),\;x\in H \text{ and } y\in K.
\]
Set
\[
M:=N_1+N_2,
\]
and let
\[
S=\left\{w\in(H\otimes_\pi K)':\;\left\|w\right\|\leq M\right\}. 
\]
By Banach-Alaoglu Theorem, $S$ is weak*-compact. Choose two sequences $\left\{w_1^{(n)}\right\}$ and $\left\{w_2^{(n)}\right\}$ in $(H\otimes_\pi K)'$ such that
\[
w_1^{(n)}(x\otimes y)=u_1^{(\frac{1}{n})}(x,y),\;n\in\N
\]
\[
w_2^{(n)}(x\otimes y)=u_2^{(\frac{1}{n})}(x,y),\;n\in\N.
\]
 
So from the definition of $S$ and \eqref{help}, \eqref{halp}, theses sequences $\left\{w_1^{(n)}\right\}$ and $\left\{w_2^{(n)}\right\}$ are in $S$. Hence, they  have convergent subsequences $\left\{w_1^{(n_k)}\right\}$ and $\left\{w_2^{(n_k)}\right\}$ respectively. Thus, when $k\rightarrow \infty$,
\begin{align*}
w_1^{(n_k)}  
&\stackrel{w^*}{\rightharpoonup}w_1,\\
w_2^{(n_k)}&\stackrel{w^*}{\rightharpoonup}w_2.
\end{align*}

Also, $\epsilon\rightarrow0$ when $k\rightarrow\infty$. So from the inequalities in \eqref{estimation} and \eqref{estim} we get:

\begin{align*}
\left|w_1(x\otimes y)\right|&=\left|\lim_{k\rightarrow\infty}w^{(n_k)}_1(x\otimes y)\right|\\
&=\lim_{k\rightarrow \infty}\left|w^{(n_k)}_1(x\otimes y)\right|\leq \lim_{\epsilon\rightarrow0}\left\|A(\epsilon)x\right\|\left\|B(\epsilon)y\right\|\\
&=\left\|\left|A\right|x\right\|\left\|\left|B\right|y\right\|\\
&=\left\|Ax\right\|\left\|By\right\|.
\end{align*}
Therefore, $w_1\in (H\otimes_\pi K)'$.
Similary for $w_2$, we have
\begin{align*}
\left|w_2(x\otimes y)\right|&=\left|\lim_{k\rightarrow\infty}w^{(n_k)}_2(x\otimes y)\right|\\
&=\lim_{k\rightarrow0}\left|w^{(n_k)}_2(x\otimes y)\right|\leq\lim_{\epsilon\rightarrow0}\left\|C(\epsilon)x\right\|\left\|D(\epsilon)y\right\|\\
&=\left\|\left|C\right|x\right\|\left\|\left|D\right|y\right\|\\
&=\left\|Cx\right\|\left\|Dy\right\|.
\end{align*}
Also, $w_2\in (H\otimes_\pi K)'$. Set
\begin{align*}
w_1(x\otimes y)=u_1(x,y),\\
w_2(x\otimes y)=u_2(x,y).
\end{align*}

From \eqref{limit}
\[
u(x,y)=w^{(n_k)}_1(x\otimes y)+w^{(n_k)}_2(x\otimes y).
\]
Now, take the limit point when $k\rightarrow\infty$, we get 
\[
u(x,y)=w_1(x\otimes y)+w_2(x\otimes y).
\]
By construction,
\[
u(x,y)=u_1(x,y)+u_2(x,y)
\]
such that
\[
\left|u_1(x,y)\right|\leq\left\|Ax\right\|\ \left\|By\right\|,
\]
\[
\left|u_2(x,y)\right|\leq\left\|Cx\right\|\left\|Dy\right\|.
\]
and
\[
\left|u(x,y)\right|=\left|u_1(x,y)+u_2(x,y)\right|\leq\left\|Ax\right\|\ \left\|By\right\|+\left\|Cx\right\|\left\|Dy\right\|.
\]
\end{enumerate}

Hence Case Three  follow from Case Two. So
\[
case 1\Rightarrow case 2\Rightarrow case 3
\]
therefore if we prove Case One, we are done. Case One will follow from the next lemma.
\end{proof}

\begin{lemma}(\cite{X06})\label{chapter}

Let $H$ and $K$ be Hilbert spaces and let $C\in L(H)$ and $D\in L(K)$ be invertible. Assume that a bilinear form $ u:H\times K\rightarrow \C$ satisfies
\[
\left|u(x,y)\right|\leq\left\|x\right\|\ \left\|y\right\|+\left\|Cx\right\|\left\|Dy\right\|(*)
\]
for all $x\in H$ and $y \in K $. Then there are bilinear forms $u_1,u_2:H\times K\longrightarrow \C$  such that 
\[
u=u_1+u_2 
\]
and
\[
\left|u_1(x,y)\right|\leq \left\|x\right\|\left\|y\right\|,  \left|u_2(x,y)\right| \leq \left\|Cx\right\| \left\|Dy\right\| 
\]
for all $x\in H$ and $y\in K$
\end{lemma}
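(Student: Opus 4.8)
The plan is to recast the decomposition as a single convexity statement and to obtain it by a Hahn--Banach separation argument in the predual, exactly as the technique of Theorem~\ref{ch2} suggests. Using the isometric isomorphism between $H\hat{\otimes}_\pi K$ and the trace-class operators recalled at the start of this section, I regard $u$ as an element $w$ of the dual $(H\hat{\otimes}_\pi K)'$, via $u(x,y)=w(x\otimes y)$. I then introduce the two sets
\[
\mathcal B_1=\{v\in(H\hat{\otimes}_\pi K)' : |v(x,y)|\le\|x\|\,\|y\|\ \text{for all } x,y\},\qquad
\mathcal B_2=\{v : |v(x,y)|\le\|Cx\|\,\|Dy\|\ \text{for all } x,y\}.
\]
Both are convex, balanced and weak\*-compact by Banach--Alaoglu (as in the proof of Theorem~\ref{ch2}), hence so is the algebraic sum $\mathcal B_1+\mathcal B_2$. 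The sought conclusion ``$u=u_1+u_2$ with $|u_1(x,y)|\le\|x\|\|y\|$ and $|u_2(x,y)|\le\|Cx\|\|Dy\|$'' is precisely the membership $w\in\mathcal B_1+\mathcal B_2$, so it suffices to establish this.

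First I would argue by contradiction: if $w\notin\mathcal B_1+\mathcal B_2$, then, since that set is weak\*-compact and convex, Hahn--Banach separation yields a weak\*-continuous functional, i.e. an element $z$ of the predual $H\hat{\otimes}_\pi K$, with
\[
|\langle w,z\rangle| \;>\; \sup_{v\in\mathcal B_1+\mathcal B_2}|\langle v,z\rangle|
\;=\; \sup_{v_1\in\mathcal B_1}|\langle v_1,z\rangle| + \sup_{v_2\in\mathcal B_2}|\langle v_2,z\rangle|,
\]
the splitting of the supremum being legitimate because $\mathcal B_1,\mathcal B_2$ are balanced. The key computation is that these two support functions are projective tensor norms: by trace duality $\mathcal B_1$ is the unit ball of the operator norm, so $\sup_{\mathcal B_1}|\langle v_1,z\rangle|=\|z\|_\pi$; and since $C,D$ are invertible, the map $(x,y)\mapsto(Cx,Dy)$ is a bijection carrying $\mathcal B_2$ onto $\mathcal B_1$, which gives $\sup_{\mathcal B_2}|\langle v_2,z\rangle|=\|(C\otimes D)z\|_\pi$. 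Thus the separating $z$ would violate the \emph{integrated inequality}
\[
|\langle w,z\rangle|\;\le\;\|z\|_\pi+\|(C\otimes D)z\|_\pi \qquad\text{for all } z\in H\hat{\otimes}_\pi K,
\]
so the whole matter reduces to proving this inequality (it suffices to treat finite-rank $z$, by density, and then pass to the limit as in Theorem~\ref{ch2}).

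The hard part will be the integrated inequality for general $z$. For a rank-one tensor $z=x\otimes y$ it is exactly the hypothesis $(*)$, since $\|z\|_\pi=\|x\|\|y\|$ and $\|(C\otimes D)z\|_\pi=\|Cx\|\|Dy\|$. Passing to higher rank is the genuine obstacle. Writing the associated trace-class operator in its singular value decomposition $T=\sum_n s_n\,|f_n\rangle\langle e_n|$ gives $\langle w,z\rangle=\sum_n s_n\,u(f_n,e_n)$ and $\|z\|_\pi=\sum_n s_n$, and applying $(*)$ term by term only bounds $|\langle w,z\rangle|$ by $\sum_n s_n+\sum_n s_n\|Cf_n\|\|De_n\|$. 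The trouble is that the second sum merely dominates $\|(C\otimes D)z\|_\pi$ \emph{from above}: the vectors $\{Cf_n\}$ and $\{De_n\}$ need not be orthogonal, so the singular value decomposition of $z$ does not simultaneously diagonalise the twisted operator $(C\otimes D)z$, and a naive term-by-term estimate overshoots. The argument must therefore exploit the cancellation present in the signed sum $\sum_n s_n u(f_n,e_n)$ rather than estimate it termwise; equivalently, one must realise a norm-preserving Hahn--Banach extension into the Hilbert space of the contraction prescribed on the graph of $D$ inside the $\ell^1$-sum $K\oplus_1 K$, the subtlety being that Hilbert space is not $1$-injective. I expect precisely this step to be the crux, and to be the one that genuinely requires the two-term balance guaranteed by the invertibility of $C$ and $D$ — the same balance that, in Section~3, will be shown to break down for three terms.
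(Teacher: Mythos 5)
Your reduction is correct, and despite the dual phrasing it is essentially the paper's own framework: the paper restricts to the subspace $E=\mathrm{span}\left\{(x\otimes y,\,Cx\otimes Dy)\right\}$ of $(H\hat{\otimes}_\pi K)\oplus_1(H\hat{\otimes}_\pi K)$, produces a norm-one functional there, and extends it by Hahn--Banach; that is exactly the extension form of your separation argument, and both routes reduce the lemma to the same integrated inequality, namely the paper's estimate \eqref{trick}. But that inequality is precisely where your proposal stops. You explicitly leave it unproved (``I expect precisely this step to be the crux''), and it is not a routine verification --- it is the entire mathematical content of the lemma, the only place where the hypothesis $(*)$ is actually used beyond rank one. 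As it stands, then, the proposal is a correct reduction accompanied by an accurate diagnosis of the obstacle, but not a proof.

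The idea that fills the gap is a change of representation, not cancellation in the signed sum. Take Schmidt decompositions $w=\sum_{i=1}^n\xi_i\otimes\eta_i$ and $(C\otimes D)w=\sum_{i=1}^n\rho_i\otimes\sigma_i$ with each family orthogonal, so that $\|w\|_\pi=\sum_i\|\xi_i\|^2=\sum_i\|\eta_i\|^2$ and likewise for $(C\otimes D)w$ (invertibility of $C,D$ gives equal ranks). Writing $C\xi_i=\sum_j\alpha_{ij}\rho_j$ and $D\eta_i=\sum_j\beta_{ij}\sigma_j$, one checks $\beta=(\alpha^t)^{-1}$, and taking the singular value decomposition $\alpha=UdV$ one rotates: with $\hat{\xi}_i=\sum_j\bar{u}_{ji}\xi_j$, $\hat{\eta}_i=\sum_j u_{ji}\eta_j$, $\hat{\rho}_i=\sum_j v_{ij}\rho_j$, $\hat{\sigma}_i=\sum_j\bar{v}_{ij}\sigma_j$, one still has $w=\sum_i\hat{\xi}_i\otimes\hat{\eta}_i$, the sums of squared norms are unchanged, and now $C\hat{\xi}_i=d_i\hat{\rho}_i$ and $D\hat{\eta}_i=d_i^{-1}\hat{\sigma}_i$. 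The singular values cancel in the product, $\|C\hat{\xi}_i\|\,\|D\hat{\eta}_i\|=\|\hat{\rho}_i\|\,\|\hat{\sigma}_i\|$ --- this is the precise form of the ``two-term balance'' you sensed --- so applying $(*)$ termwise to this adapted representation and then Cauchy--Schwarz to each of the two sums yields exactly $\left|\sum_i u(\hat{\xi}_i,\hat{\eta}_i)\right|\leq\|w\|_\pi+\|(C\otimes D)w\|_\pi$, i.e.\ \eqref{trick}. In other words, one does not estimate a fixed SVD representation termwise (which, as you correctly observed, overshoots); one first replaces it by a representation of $w$ simultaneously adapted to $1\otimes 1$ and to $C\otimes D$, for which the termwise bound is tight. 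Your closing speculation (cancellation in the signed sum, failure of $1$-injectivity of Hilbert space) points away from this mechanism; note also that it is exactly this SVD alignment that breaks for three terms, since one cannot in general choose one rotation adapted to both $C\otimes D$ and $E\otimes F$ --- hence the condition $U=S$ in \eqref{condition} and the counterexample of Section 3.
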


\begin{proof}
Let $H \hat{\otimes}_\pi K$ be the projective tensor product of $H$ and $K$. This space is isometrically isomorphic to the space $B_1(\bar{H},K)$ of trace-class operators from $\bar{H}$ into $K$, \cite{D93} . Here, $\bar{H}$ denote the conjugate Hilbert space of $H$. 
Let
\[
w=\sum_{i=1}^m x_i\otimes y_i
\]
be a linear combination of elementary tensors in  $H \hat{\otimes}_\pi K$, then the corresponding linear map $T_w:\bar{H}\rightarrow K$ given by 
\[
T_w\zeta=\sum_{i=1}^m \left\langle x_i|\zeta\right\rangle y_i,\;\zeta\in H
\]
is a finite rank operator and the projective norm $\pi$ of $w$ is given by 
\[
\left\|w\right\|_\pi=\left\|T_w\right\|_1=Tr((T_w^*T_w)^\frac{1}{2}).
\]
Therefore, by Theorem (\cite{C00}, th. 18.13) we can find orthogonal vectors $\left\{\xi_1,...,\xi_n\right\} \in H $ and $\left\{\eta_1,...,\eta_n\right\}\in K$ such that 
\[
w=\sum_{i=1}^n\xi_i\otimes\eta_i
\]
and
\[
\left\|w\right\|_\pi=\sum\left\|\xi_i\right\|^2=\sum\left\|\eta_i\right\|^2,
\]
where $n$ is the rank of $(T_w)$.

In the same way
\[
(C\otimes D)w=\sum^n_{i=1} C\xi_i\otimes D\eta_i
\]
can be written as
\[
(C\otimes D)w=\sum_{i=1}^{n'}\rho_i\otimes\sigma_i.
\]
By invertibility of $C$ and $D$,
\[
n'=rank\left(T_{(C\otimes D)w}\right)=rank(T_w)=n
\]
and
\[
\left\|(C\otimes D)w\right\|_\pi=(\sum_{i=1}^n\left\|\rho_i\right\|^2)=(\sum_{i=1}^n\left\|\sigma_i\right\|^2),
\]

for orthogonal vectors $\left\{\rho_1,...,\rho_n \right\}\in H$ and $\left\{\sigma_1,...,\sigma_n\right\}\in K$.

Since
\[
\sum^n_{i=1} C\xi_i\otimes D\eta_i =\sum_{i=1}^n\rho_i\otimes\sigma_i,
\]
we have by linear independence of each of the sets $\left(C\xi_i\right)_{i=1}^n$, $\left(D\eta_i\right)_{i=1}^n$, $\left(\rho_i\right)_{i=1}^n$ and $\left(\sigma_i\right)_{i=1}^n$ that
\[
C\xi_i=\sum_{j=1}^n\alpha_{ij}\rho_j
\]
and
\[
D\eta_i=\sum^n_{j=1}\beta_{ij}\sigma_j
\]
for unique $\alpha_{ij}$, $\beta_{ij}\in \C$.
Moreover, since
\[
\sum_{i=1}^n\rho_i\otimes\sigma_i= \sum^n_{i=1} C\xi_i\otimes D\eta_i=\sum_{i,j,k=1}^n\alpha_{ij}\beta_{ik}\rho_j\otimes\sigma_k
\]
and from linear independence, we must have
\[
\sum_{j=1}^n\alpha_{ji}\beta_{jk} =\delta_{ik}.
\]
Hence the matrices
\[
\alpha=(\alpha_{ij}) \text{ where }i,j=\left\{1,....,n\right\} \text{ and }\beta=(\beta_{i,j})\text{ where }{i,j}=\left\{1,.......,n\right\}
\]
are invertible and $\beta^{-1}=(\alpha^t)$ where $\alpha^t$ is the transpose of $\alpha$. Write now  
\[
\alpha=UdV
\]
where $U,V\in U(n)$ and $d=diag(d_1,.....,d_n)$ is a diagonal matrix with strictly positive entries $d_1,...,d_n$.
Set
\[
\hat{\xi}_i=\sum_{j=1}^nu^*_{ij}\xi_j=\sum_{j=1}^n\bar{u}_{ji}\xi_j
\]
and
\[
\hat{\rho_i}=\sum^n_{i=1} v_{ij}\rho_j;
\]
then we obtain
\[
C\xi_i=\sum_{j=1}^nu_{ij}d_j\hat{\rho_j}.
\]
Now
\[
\beta=(\alpha^t)^{-1}=(v^tdu^t)^{-1}=\bar{u}d^{-1}\bar{v}.
\]
Then setting
\[
\hat{\eta_i}=\sum_{j=1}^n\bar{u}^*_{ij}\eta_j=\sum_{j=1}^nu_{ji}\eta_j
\]
\[
\hat{\sigma_i}=\sum_{i=1}^n\bar{v}_{ij}\sigma_j,
\]
we obtain similarly
\[
D\eta_i=\sum_{j=1}^n\bar{u}_{ij}d^{-1}_j\hat{\sigma_j}.
\]
Thus
\[
C(\hat{\xi}_i)=\sum_{j=1}^n \bar{u}_{ji} C(\xi_j)=\sum_{j,k=1}^n \bar{u}_{ji}u_{jk} d_k\hat{\rho_k}=\sum_{k=1}^n \delta_{ik}d_k\hat{\rho_k}=d_i\hat{\rho_i}, 
\]
and we obtain similarly,
\[
D(\hat{\eta_i}) =\sum_{j=1}^n u_{ji}D(\eta_j)=\sum_{j,k=1}^n u_{ji} \bar{u}_{jk}d^{-1}_{k} \hat{\sigma_k}=\sum_{k=1}^n \delta_{ik} d^{-1}_k \hat{\sigma_k}=d_{i}^{-1} \hat{\sigma_i}.
\]
Since
\[
\sum_{i=1}^n\hat{\xi}_i\otimes \hat{\eta}_i=\sum_{i,j,k=1}^n \bar{u}_{ji}\xi_j\otimes u_{ki}\eta_k=\sum_{j,k=1}^n \delta_{jk} \xi_j\otimes \eta_k =\sum_{j=1}^n \xi_j\otimes \eta_j= w,
\]
this implies
\[
(C\otimes D)w=\sum_{j=1}^n d_j\hat{\rho}_j\otimes d_j^-1\hat{\sigma}_j=\sum_{j=1}^n\hat{\rho}_j\otimes\hat{\sigma}_j.
\]
Now
\[
\sum_{i=1}^n \left\|\hat{\xi}_j\right\|^2=\sum^n_{i=1}\left\|\sum_{j=1}^n\bar{u}_{ji}\xi_j\right\|=\sum_{i=1}^n\sum_{j=1}^n\left|u_{ij}\right|^2\left\|\xi_i\right\|^2=\sum_{i=1}^n\left\|\xi_i\right\|^2,
\]
and similarly,
\[
\sum_{i=1}^n\left\|\hat{\eta}_j\right\|^2=\sum_{i=1}^n\left\|\sum_{j=1}^nu_{ji}\eta_j\right\|^2=\sum_{i=1}^n\sum_{j=1}^n\left|u_{ji}\right|^2\left\|\eta_j\right\|^2=\sum_{i=1}^n\left\|\eta_i\right\|^2.
\]
Also
\[
\sum_{i=1}^n\left\|\hat{\rho_i}\right\|^2=\sum_{i=1}^n\left\|\sum_{j=1}^n v_{ij}\rho_j\right\|^2=\sum_{i=1}^n\sum_{j=1}^n\left|v_{ij}\right|^2\left\|\rho_j\right\|^2=\sum_{i=1}^n\left\|\rho_i\right\|^2
\]
and similarly,
\[
\sum_{i=1}^n\left\|\hat{\sigma_i}\right\|^2=\sum_{i=1}^n\left\|\sum_{j=1}^n \bar{v}_{ij}\sigma_j\right\|^2=\sum_{i=1}^n\sum_{j=1}^n\left|\bar{v}_{ij}\right|^2\left\|\sigma_j\right\|^2=\sum_{i=1}^n\left\|\sigma_i\right\|^2.
\]
Therefore
\[
\left\| w  \right\|_\pi = \sum_{j=1}^n \left\| \hat{\xi}_j \right\| ^2=\sum_{j=1}^n\left\|\hat{\eta}_j\right\|^2
\]
\[
\left\|(C\otimes D)w\right\|_\pi=\sum^n_{j=1}\left\|\hat{\rho}_j\right\|^2=\sum_{j=1}^n\left\|\hat{\sigma}_j\right\|^2.
\]
Hence
\begin{align*}
\left|\ \sum_{i=1}^n u(x_i,y_i) \right|\ &= \left|\ \sum_{i=1}^n u(\hat{\xi_i},\hat{\eta_i}) \right|\\\
&\leq \sum_{i=1}^n\left\|\hat{\xi_i}\right\| \left\| \hat{\eta_i}\right\|+\sum _{i=1}^n\left\|C\hat{\xi_i}\right\|\left\|D\hat{\eta_i}\right\|\\
&=\sum_{i=1}^n\left\|\hat{\xi_i}\right\| \left\| \hat{\eta_i}\right\| +\sum_{i=1}^n\left\|\hat{\rho_i}\right\| \left\| \hat{\sigma_i}\right\|\\
&\leq\sum_{i=1}^n(\left\|\hat{\xi_i}\right\|)^2)^\frac{1}{2} (\sum_{i=1}^n\left\|\eta_i\right\|^2)^\frac{1}{2} +\sum_{i=1}^n(\left\|\hat{\rho_i}\right\|)^2)^\frac{1}{2}
(\sum_{i=1}^n\left\|\sigma_i\right\|^2)^\frac{1}{2}
\end{align*}
\begin{equation}\label{trick}
= \left\|\sum x_i\otimes y_i\right\|_\pi + \left\| \sum Cx_i\otimes Dy_i\right\|_\pi 
\end{equation}
If $V$ and $W$ are Banach spaces we denote by $V\oplus_1 W$ the direct sum of $V$ and $W$ endowed with the norm 

\[
\left\|(v,w)\right\|=\left\|v\right\|+\left\|w\right\|.
\]
Let $E$ be the linear span of all vectors $(x\otimes y,C(x)\otimes D(y))$ in $(H \hat{\otimes}_\pi K)\oplus_1 (H \hat{\otimes}_\pi K)$ where $x\in H$ and $y\in K$. According to the above estimate in \eqref{trick} we find a bounded linear functional $w \in E^* $ with $\left\|w\right\| \leq 1$ such that 
\[
u(x,y)= w((x\otimes y,C(x)\otimes D(y))
\]
for all $x\in H$ and $y \in K$. By the Hahn-Banach Theorem there exists a bonunded linear functional $\tilde{w}$ on $(H \hat{\otimes}_\pi K)\oplus_1 (H \hat{\otimes}_\pi K)$   with  $\left\|\tilde{w}\right\|=\left\|w\right\| \leq 1$ extending $w$. 
We set
\[
u_1(x,y)=\tilde{w}((x\otimes y,0)), u_2(x,y)=\tilde{w}(0,C(x)\otimes D(y)).
\] 
By construction we have 
\[
u=u_1+u_2.
\]
Moreover
\[
\left|u_1(x,y)\right| \leq \left\|\tilde{w}\right\|\left\|x\right\|\left\|y\right\|\leq \left\|x\right\|\left\|y\right\|
\]
and
\[
\left|u_2(x,y)\right|\leq \left\|\tilde{w}\right\|\left\|C(x)\right\|\left\|D(y)\right\|\leq \left\|C(x)\right\|\left\|D(y)\right\|.
\]
This yields the claim.
\end{proof}

\subsection{Decomposition of bilinear form into three terms} 
\noindent

In this subsection, we consider the case of six bounded operators by adding $E,F$ to $A,B$ and $C,D$ and ask: if we can majorize the bilinear form $u$ with bounded forms,
\[
\left|u(x,y)\right|\leq\left\|Ax\right\|\ \left\|By\right\|+\left\|Cx\right\|\left\|Dy\right\|+\left\|Ex\right\|\left\|Fy\right\|,
\]
then can we decompose $u$ as a sum of three bilinear forms
\[
u=u_1+u_2+u_3
\]
which satisfy the boundedness conditions
\[
\left|u_1(x,y)\right|\leq \left\|Ax\right\|\ \left\|By\right\|,\left|u_2(x,y)\right|\leq \left\|Cx\right\|\left\|Dy\right\|, \left|u_3(x,y)\right|\leq \left\|Ex\right\|\left\|Fy\right\|?
\]
The answer is No in general; see Section 3. But in this section, we look for a particular condition which make it possible to solve this problem.

Singular value decomposition plays an important role in the proof of the main statement as in Lemma 2.2. As we have seen in Section 1, by the singular value decomposition for the complex matrix $\alpha$:
\[
\alpha=UdV
\]
we can get new sets of vectors, $\left\{\hat{\xi_1},...,\hat{\xi_n}\right\}$, $\left\{\hat{\rho_1},...,\hat{\rho_n}\right\}$ and $\left\{\hat{\eta_1},...,\hat{\eta_n}\right\}$, $\left\{\hat{\sigma_1},...,\hat{\sigma_n}\right\}$ in H and K respectively. By using those sets of vectors we can diagonalize the operators $C$ and $D$,
\[
C(\hat{\xi}_i)=d_i\hat{\rho_i}, 
\]
\[
D(\hat{\eta_i})=d_{i}^{-1} \hat{\sigma_i},
\]

and this helps us to get this estimate in Theorem \ref{trick}. By the Hahn-Banach Theorem, we can obtain from this a decomposition of a bilinear form into two terms.

We can use the same technique as in the proof of Lemma 2.2 to apply the operator $E\otimes F$ on the same vector $w$ in $H\hat{\otimes_\pi}K$ and calculate the SVD for the new complex matrix 
\[
\tilde{\alpha}=SeT
\]
where $e$ is a diagonal matrix and $S,T$ are unitary matrices to get other sets of vectors,
$\left\{\tilde{\xi_1},\ldots,\tilde{\xi_n}\right\},\left\{\check{\tilde{\rho_1}},\ldots,\check{\tilde{\rho_n}}\right\}$ in H and $\left\{\check{\eta_1},\ldots,\check{\eta_n}\right\},\left\{\check{\tilde{\sigma_1}},\ldots,\check{\tilde{\sigma_n}}\right\}$ in K. Therefore we can diagonalize the operators $E$ and $F$ as well,
\[
E(\check{\xi_i})=e_i\check{\tilde{\rho_i}},
\]
\[
F(\check{\eta_i})=e_{i}^{-1}\check{\tilde{\sigma_i}}
\]

In order to make the diagonalization of $C,D$ and $E,F$ compatible, we must put a very special technical condition on the unitary matrix in the SVD i.e.
\begin{equation}\label{condition}
U=S
\end{equation}
If this condition happens, we can decompose the bilinear form into three terms as we will see in the following argument. Actually, we can consider also three cases as in Theorem \ref{ch2} as follows.
\begin{enumerate}
\item \textbf{Case One}:
We suppose that A=B=I and C,D,E and F are invertible operators.
\item \textbf{Case Two}:
We suppose that all the operators A,B,C,D and E,F are invertible.
Then set
\[
v(x,y):=u(A^{-1}x,B^{-1}y).
\] 
By applying Case One to the bilinear form $v$, we have
\[
\left|v(x,y)\right|\leq\left\|x\right\|\left\|y\right\|+\left\|CA^{-1}x\right\|\left\|DB^{-1}y\right\|+\left\|EA^{-1}x\right\|\left\|FB^{-1}y\right\|
\]
Then by the same case,
\[
v=v_1+v_2+v_3,
\]
where
\begin{align*}
\left|v_1(x,y)\right|&\leq\left\|x\right\|\left\|y\right\|,\\
\left|v_2(x,y)\right|&\leq\left\|CA^{-1}x\right\|\left\|DB^{-1}y\right\|\text{ and }\\ \left|v_3(x,y)\right|&\leq\left\|EA^{-1}x\right\|\left\|FB^{-1}y\right\|. 
\end{align*}
Setting
\[
u_i(x,y):=v_i(Ax,By),
\]
Case Two can be reduced to Case One. 

\item \textbf{Case Three}: We consider the general case that $A$\ is any linear operator in B(H). Then $A^*A$ is positive i.e. $A^*A\geq0$. So $K=A^*A+\epsilon I\geq0$ for any $\epsilon>0$, hence $K\geq\epsilon I$. Therefore $K$ is invertible. In fact, $sp(K)\subseteq[\epsilon,\infty)$. This means $0\notin sp(K)$ which is equivalent to $K$ being invertible.
Thus
\[
A(\epsilon):=(A^*A+ \epsilon I)^\frac{1}{2}
\]
is invertible for $\epsilon>0$.
Set
\[
A(\epsilon):=(A^*A+\epsilon1)^\frac{1}{2} , B(\epsilon):=(B^*B+\epsilon1)^\frac{1}{2} 
\]
\[
C(\epsilon):=(C^*C+\epsilon1)^\frac{1}{2} ,D(\epsilon):=(D^*D+\epsilon1)^\frac{1}{2}
\]
\[
E(\epsilon):=(E^*E+\epsilon1)^\frac{1}{2} ,F(\epsilon):=(F^*F+\epsilon1)^\frac{1}{2}.
\]
Now from the polar decomposition, we can represent any operator $A\in B(H)$ by $A=u\left|A\right|$ where $u$ is a partial isometry on $H$, so
\[
\left\|Ax\right\|=\left\|u\left|A\right|x\right\|\leq\left\|\left|A\right|x\right\|,\text{ }x\in H.
\]
Since
\[
0\leq\left|A\right|=(A^*A)^\frac{1}{2}\leq(A^*A+\epsilon I)^\frac{1}{2},
\]
so
\[
\left\|\left|A\right| x|\right|\leq\left\|A(\epsilon) x\right\|,\;x\in H.
\]
Therefore, we have
\[
\left|u(x,y)\right|\leq\left\|A(\epsilon)x\right\|\ \left\|B(\epsilon)y\right\|+\left\|C(\epsilon)x\right\|\left\|D(\epsilon)y\right\|+\left\|E(\epsilon)x\right\|\left\|F(\epsilon)y\right\|.
\]
Then from Case Two

\begin{align}\label{three}
u=u_1^\epsilon+u_2^\epsilon+u_3^\epsilon,
\end{align}
such that
\begin{align}\label{es1}
\left|u_1^\epsilon(x,y)\right|&\leq \left\|A(\epsilon)x\right\|\ \left\|B(\epsilon)y\right\|,\\\label{es2}
\left|u_2^\epsilon(x,y)\right|&\leq \left\|C(\epsilon)x\right\|\left\|D(\epsilon)y\right\|,\\\label{es3}
\left|u_3^\epsilon(x,y)\right|&\leq \left\|E(\epsilon)x\right\|\left\|F(\epsilon)y\right\|.
\end{align}

By the same assumption for $\epsilon$ in Theorem \ref{ch2}, we can get from the estimations in \eqref{es1}, \eqref{es2} and \eqref{es3} that,
\begin{align}\label{inq1}
\left\|u^\epsilon_1\right\|\leq N_1&:=\left\|A(1)\right\|\left\|B(1)\right\|\\\label{inq2}
\left\|u_2^\epsilon\right\|\leq N_2&:=\left\|C(1)\right\|\left\|D(1)\right\|\\\label{inq3}
\left\|u_3^\epsilon\right\|\leq N_3&:=\left\|E(1)\right\|\left\|F(1)\right\|.
\end{align}

Set, now 
\[
M:=N_1+N_2+N_3
\]
and let
\[
S=\left\{w\in(H\otimes_\pi K)':\;\left\|w\right\|\leq M\right\}. 
\]
Also, by Banach-Alaoglu Theorem, $S$ is weak*-compact. Choose three sequences $\left\{w^{(n)}_1\right\}$, $\left\{w^{(n)}_2\right\}$ and $\left\{w^{(n)}_3\right\}$ in $(H\otimes_\pi K)'$, such that 

\begin{align*}
w_1^{(n)}(x\otimes y)&=u_1^{(\frac{1}{n})}(x,y),\;n\in\N\\
w_2^{(n)}(x\otimes y)&=u_2^{(\frac{1}{n})}(x,y),\;n\in\N,\\
w_3^{(n)}(x\otimes y)&=u_3^{(\frac{1}{n})}(x,y),\;n\in\N.
\end{align*}

So from the definition of $S$ and \eqref{inq1}, \eqref{inq2} and \eqref{inq3}. They are in $S$. Thus, they have convergent subsequences $w_1^{(n_k)}$, $w_2^{(n_k)}$ and $w_3^{(n_k)}$ respectively. When $k\rightarrow \infty$,
\begin{align*}
w_1^{(n_k)}&\stackrel{w^*}{\rightharpoonup}w_1,\\
w_1^{(n_k)}&\stackrel{w^*}{\rightharpoonup} w_2,\\
w_3^{(n_k)}&\stackrel{w^*}{\rightharpoonup} w_3,
\end{align*}

Also, $\epsilon\rightarrow0$ when $k\rightarrow0$. So from the inequalities in \eqref{es1}, \eqref{es2} and \eqref{es3}, we have

\begin{align*}
\left|w_1(x\otimes y)\right|&=\left|\lim_{k\rightarrow\infty}w^{(n_k)}_1(x\otimes y)\right|\\
&=\lim_{k\rightarrow \infty}\left|w^{(n_k)}_1(x\otimes y)\right|\leq \lim_{\epsilon\rightarrow0}\left\|A(\epsilon)x\right\|\left\|B(\epsilon)y\right\|\\
&=\left\|\left|A\right|x\right\|\left\|\left|B\right|y\right\|\\
&=\left\|Ax\right\|\left\|By\right\|.
\end{align*}
Therefore, $w_1\in(H\otimes_\pi K)'$. Similarly for $w_2$ and $w_3$, we have 

\begin{align*}
\left|w_2(x\otimes y)\right|&=\left|\lim_{k\rightarrow\infty}w^{(n_k)}_2(x\otimes y)\right|\\
&=\lim_{k\rightarrow0}\left|w^{(n_k)}_2(x\otimes y)\right|\leq\lim_{\epsilon\rightarrow0}\left\|C(\epsilon)x\right\|\left\|D(\epsilon)y\right\|\\
&=\left\|\left|C\right|x\right\|\left\|\left|D\right|y\right\|\\
&=\left\|Cx\right\|\left\|Dy\right\|,
\end{align*}
and
\begin{align*}
\left|w_3(x\otimes y)\right|&=\left|\lim_{k\rightarrow\infty}w^{(n_k)}_3(x\otimes y)\right|\\
&=\lim_{k\rightarrow0}\left|w^{(n_k)}_3(x\otimes y)\right|\leq\lim_{\epsilon\rightarrow0}\left\|E(\epsilon)x\right\|\left\|F(\epsilon)y\right\|\\
&=\left\|\left|E\right|x\right\|\left\|\left|F\right|y\right\|\\
&=\left\|Ex\right\|\left\|Fy\right\|.
\end{align*}
Therefore, also $w_2,w_3\in(H\otimes_\pi K)'$. Hence there are $u_1,u_2$ and $u_3\in Bil(H,K)$ such that
\begin{align*}
w_1(x\otimes y)&=u_1(x,y),\\
w_2(x\otimes y)&=u_2(x,y),\\
w_3(x\otimes y)&=u_3(x,y).
\end{align*}

From \eqref{three}

\[
u(x,y)=w^{(n_k)}_1(x\otimes y)+w^{(n_k)}_2(x\otimes y)+w^{(n_k)}_3(x\otimes y),
\]
take the limit point for \eqref{three}, we have
\[
u(x,y)=w_1(x\otimes y)+w_2(x\otimes y)+w_3(x \otimes y).
\]
By construction
\[
u(x,y)=u_1(x,y)+u_2(x,y)+u_3(x,y),
\]
such that
\begin{align*}
\left|u_1(x,y)\right|&\leq \left\|Ax\right\|\left\|By\right\|,\\
\left|u_2(x,y)\right|&\leq \left\|Cx\right\|\left\|Dy\right\|,\\
\left|u_1(x,y)\right|&\leq \left\|Ax\right\|\left\|By\right\|.
\end{align*}
and
\[
\left|u(x,y)\right|\leq\left\|Ax\right\|\ \left\|By\right\|+\left\|Cx\right\|\left\|Dy\right\|+ \left\|Ex\right\|\left\|Fy\right\|.
\]
Thus Case Three follows from Case Two. So
\[
case 1\Rightarrow case 2\Rightarrow case 3
\]
therefore we can reduce the problem to Case One.
\end{enumerate}

From Lemma \ref{chapter}, we can find orthogonal vectors $\left\{\xi_1,...,\xi_n\right\}\in H$ and $\left\{\eta_1,...,\eta_n\right\}\in K$ such that 
\[
w=\sum_{i=1}^n\xi_i\otimes\eta_i
\]
and
\[
\left\|w\right\|_\pi=(\sum_{i=1}^n\left\|\xi_i\right\|^2)=(\sum_{i=1}^n\left\|\eta_i\right\|^2),
\]
where $n$ is the rank of $(T_w)$. Since $C$ and $D$ are invertible, we obtain similarly 
\[
(C\otimes D)w=\sum_{i=1}^n\rho_i\otimes\sigma_i,
\]
for orthogonal vectors $\left\{\rho_1,...,\rho_n\right\}\in H$ and $\left\{\sigma_1,...,\sigma_n\right\}\in K$ such that 
\[
\left\|(C\otimes D)w\right\|_\pi=(\sum_{i=1}^n\left\|\rho_i\right\|^2)=(\sum_{i=1}^n\left\|\sigma_i\right\|^2).
\]
Since
\[
\sum^n_{i=1} C\xi_i\otimes D\eta_i =\sum_{i=1}^n\rho_i\otimes\sigma_i,
\]
there are complex numbers $\alpha_{ij}$ and $\beta_{ij} $ such that 
\[
C\xi_i=\sum_{j=1}^n\alpha_{ij}\rho_j , D\eta_i=\sum^n_{j=1}\beta_{ij}\sigma_j.
\]
We have
\[
\sum_{j=1}^n\rho_j\otimes\sigma_j= \sum^n_{i=1} C\xi_i\otimes D\eta_i=\sum_{i,j,k=1}^n\alpha_{ij}\beta_{ik}\rho_j\otimes\sigma_k
\]
and from linear independence, we conclude
\[
\sum_{j=1}^n\alpha_{ji}\beta_{jk} =\delta_{ik}.
\]
Hence the matrices
\[
\alpha=(\alpha_{ij}), i,j=1,....,n\text{ and }\beta=(\beta_{i,j}),{i,j}=1,.......,n 
\]
are invertible and $\beta^{-1}=(\alpha^t)$ where $\alpha^t$ is the transpose of $\alpha$. Now write
\[
\alpha=UdV
\]
where $U,V\in U(n)$ and $d=diag(d_1,.....,d_n)$ is a diagonal matrix with strictly positive entries $d_1,...,d_n$.
Set
\[
\hat{\xi}_i=\sum_ju^*_{ij}\xi_j=\sum_j\bar{u}_{ji}\xi_j
\]
\[
\hat{\eta_i}=\sum_j\bar{u}^*_{ij}\eta_j=\sum_ju_{ji}\eta_j
\]
and
\[
\hat{\rho_i}=\sum v_{ij}\rho_j ,\hat{\sigma_i}=\sum \bar{v}_{ij}\sigma_j.
\]
Then we obtain
\[
C\xi_i=\sum_{j=1}^nu_{ij}d_j\hat{\rho_j}
\]
and hence
\[
C(\hat{\xi}_i)=\sum_{j=1}^n \bar{u}_{ji} C(\xi_j)=\sum_{j,k=1}^n \bar{u}_{ji}u_{jk} d_k\hat{\rho_k}=\sum_{k=1}^n \delta_{ik}d_k\hat{\rho_k}=d_i\hat{\rho_i}.
\]
Since
\[
\beta=(\alpha^t)^{-1}=(v^tdu^t)^{-1}=\bar{u}d^{-1}\bar{v},
\]
we obtain similarly
\[
D(\hat{\eta_i}) =\sum_{j=1}^n u_{ji}D(\eta_j)=\sum_{j,k=1}^n u_{ji} \bar{u}_{jk}d^{-1}_{k} \hat{\sigma_k}=\sum_{k=1}^n \delta_{ik} d^{-1}_k \hat{\sigma_k}=d_{i}^{-1} \hat{\sigma_i}.
\]
Since
\[
\sum_{i=1}^n\hat{\xi}_i\otimes \hat{\eta}_i=\sum_{i,j,k=1}^n \bar{u}_{ji}\xi_j\otimes u_{ki}\eta_k=\sum_{j,k=1}^n \delta_{jk} \xi_j\otimes \eta_k =\sum_{j=1}^n \xi_j\otimes \eta_j= w,
\]
this implies
\[
(C\otimes D)w=\sum_{j=1}^n d_j\hat{\rho}_j\otimes d_j^{-1}\hat{\sigma}_j=\sum_{j=1}^n\hat{\rho}_j\otimes\hat{\sigma}_j,
\]
and we have
\[
\left\|w\right\|_\pi = \sum_{j=1}^n \left\| \hat{\xi}_j \right\| ^2=\sum_{j=1}^n\left\|\hat{\eta}_j\right\|^2
\]
\[
\left\|(C\otimes D)w\right\|_\pi=\sum^n_{j=1}\left\|\hat{\rho}_j\right\|^2=\sum_{j=1}^n\left\|\hat{\sigma}_j\right\|^2.
\]
As the same way we can apply the operator $E\otimes F$ to $w$ to obtain
\[
(E\otimes F)w=\sum_{i=1}^n\tilde{\rho_i}\otimes\tilde{\sigma}_i=  \sum^n_{i=1} E\xi_i\otimes F\eta_i
\]
so
\[
E\xi_{i}=\sum^n_{j=1} \tilde{\alpha_{ij}}\tilde{\rho_j} , F\eta_i=\sum_{j}\tilde{\beta_{ij}}\tilde{\sigma_j}.
\]
Now write
\[
\tilde{\alpha}=SeT
\]
where $S,T$ are unitaries matrices and $e=diag(e_1,....,e_n)$ is a diagonal matrix with strictly positive entries. 
Set
\[
\check{\xi}_i=\sum_j\bar{s}_{ji}\xi_j,
\]
\[
\check{\eta_i}=\sum_js_{ji}\eta_j,
\]
and
\[
\check{\tilde{\rho_i}}=\sum t_{ij}\tilde{\rho_j},\;\check{\tilde{\sigma_i}}=\sum \bar{t}_{ij}\tilde{\sigma_j}.
\]
Then we obtain
\[
E\xi_i=\sum_{j=1}^ns_{ij}e_j\check{\tilde{\rho_j}},\;F\eta_i=\sum_j\bar{s_{ij}}e^{-1}_j\check{\tilde{\sigma_j}}
\]
\[
E(\check{\xi_i})=\sum_{j=1}^n \bar{s}_{ji} E(\xi_j)=\sum_{j,k=1}^n \bar{s}_{ji}s_{jk} e_k\check{\tilde{\rho_k}}=\sum_{k=1}^n \delta_{ik}e_k\check{\tilde{\rho_k}}=e_i\check{\tilde{\rho_i}}.
\]
Since
\[
\bar{\beta}=(\bar{\alpha}^t)^{-1}=\bar{s}e^{-1}\bar{t}
\]
\[
F(\check{\eta_i}) =\sum_{j=1}^n s_{ji}F(\eta_j)=\sum_{j,k=1}^n s_{ji} \bar{s_{jk}}e^{-1}_{k} \check{\tilde{\sigma_k}}=\sum_{k=1}^n \delta_{ik} e^{-1}_k\check{\tilde{\sigma_k}}=e_{i}^{-1} \check{\tilde{\sigma_i}}
\]
and this implies
\[
\sum\check{\xi}_i\otimes \check{\eta}_i=\sum_{i,j,k=1}^n \bar{s}_{ji}\xi_j\otimes s_{ki}\eta_k=\sum_{j,k=1}^n\delta_{jk}\xi_j\otimes\eta_k =\sum_{j=1}^n \xi_j\otimes \eta_j= w
\]
\[
(E\otimes F)w=\sum_{j=1}^n e_j\check{\tilde{\rho}}_j\otimes e_j^{-1}\check{\tilde{\sigma}}_j=\sum_{j=1}^n\check{
\tilde{\rho}}_j\otimes\check{\tilde{\sigma}}_j.
\]
Then we have 
\[
\left\|w\right\|_\pi = \sum_{j=1}^n \left\|\check{\xi_j} \right\| ^2=\sum_{j=1}^n\left\|\check{\eta_j}\right\|^2
\]
\[
\left\|(E\otimes F)w\right\|_\pi=\sum^n_{j=1}\left\|\check{\tilde{\rho_j}}\right\|^2=\sum_{j=1}^n\left\|\check{\tilde{\sigma_j}}\right\|^2.
\]
Now by using our condition in \eqref{condition}, therefore
\[
C(\check{\xi_i})=\sum_{j=1}^n \bar{s}_{ji} C(\xi_j)=\sum_{j,k=1}^n \bar{s}_{ji}u_{jk} d_k\hat{\rho_k}=\sum_{k=1}^n \delta_{ik}d_k\hat{\rho_k}=d_i\hat{\rho_i}, 
\]
and similarly 
\[
D(\check{\eta_i}) =\sum_{j=1}^n s_{ji}D(\eta_j)=\sum_{j,k=1}^n s_{ji} \bar{u}_{jk}d^{-1}_{k} \hat{\sigma_k}=\sum_{k=1}^n \delta_{ik} d^{-1}_k \hat{\sigma_k}=d_{i}^{-1} \hat{\sigma_i}.
\]
Hence
\begin{align*}
\left|\sum_{i=1}^n u(x_i,y_i)\right|&=\left|\sum_{i=1}^n u(\check{\xi_i},\check{\eta_i})\right|\\
&\leq\sum_{i=1}^n\left\|\check{\xi_i}\right\| \left\|\check{\eta_i}\right\|+\sum_{i=1}^n\left\|C\check{\xi_i}\right\|\left\|D\check{\eta_i}\right\|+\sum _{i=1}^n\left\|E\check{\xi_i}\right\|\left\|F\check{\eta_i}\right\|\\
&=\sum_{i=1}^n\left\|\check{\xi_i}\right\|\left\|\check{\eta_i}\right\| +\sum_{i=1}^n\left\|\hat{\rho_i}\right\|\left\| \hat{\sigma_i}\right\|+\sum_{i=1}^n\left\|\check{\tilde{\rho_i}}\right\|\left\|\check{\tilde{\sigma_i}}\right\|\\
&\leq\sum_{i=1}^n(\left\|\hat{\xi_i}\right\|)^2)^\frac{1}{2} (\sum_{i=1}^n\left\|\eta_i\right\|^2)^\frac{1}{2} +\sum_{i=1}^n(\left\|\hat{\rho_i}\right\|)^2)^\frac{1}{2}(\sum_{i=1}^n\left\|\hat{\sigma_i}\right\|^2)^\frac{1}{2}+\\
&+\sum_{i=1}^n(\left\|\hat{\bar\rho_i}\right\|)^2)^\frac{1}{2}(\sum_{i=1}^n\left\|\hat{\bar{\sigma_i}}\right\|^2)^\frac{1}{2}
\end{align*}
\begin{equation}\label{estimate}
\hspace{5.3em}=\left\|\sum x_i\otimes y_i\right\|_\pi + \left\| \sum Cx_i\otimes Dy_i\right\|_\pi+\left\| \sum Ex_i\otimes Fy_i\right\|_\pi.
\end{equation}
If $V$, $W$ and $Z$ are Banach spaces we denote by $V\oplus_1 W\oplus_1 Z$ the direct sum of $V$, $W$ and $Z$ endowed with the norm 
\[
\left\|(v,w,z)\right\|=\left\|v\right\|+\left\|w\right\|+\left\|z\right\|.
\]
Let $E$ be the linear span of all vectors $(x\otimes y,C(x)\otimes D(y),E(x)\otimes F(y))$ in $(H \hat{\otimes}_\pi K)\oplus_1 (H \hat{\otimes}_\pi K)\oplus_1 (H \hat{\otimes}_\pi K)$ where $x\in H$ and $y\in K$. According to the above estimate in \eqref{estimate} we find a bounded linear functional $w \in E^* $ with $\left\|w\right\| \leq 1$ such that 
\[
u(x,y)= w((x\otimes y,C(x)\otimes D(y),E(x)\otimes F(y)),
\]
for all $x\in H$ and $y \in K$. By the Hahn-Banach Theorem there exists a bonunded linear functional $\tilde{w}$ on $(H \hat{\otimes}_\pi K)\oplus_1 (H \hat{\otimes}_\pi K)\oplus_1(H \hat{\otimes}_\pi K)$ with $\left\|\tilde{w}\right\|=\left\|w\right\| \leq 1$ extending $w$.
We set 
\begin{align*}
u_1(x,y)&=\tilde{w}((x\otimes y,0,0)),\\ 
u_2(x,y)&=\tilde{w}(0,C(x)\otimes D(y),0),\\
u_3(x,y)&=\tilde{w}(0,0,E(x)\otimes F(y)),
\end{align*}
by construction we have 
\[
u=u_1+u_2+u_3.
\]
Moreover
\[
\left|u_1(x,y)\right| \leq \left\|\tilde{w}\right\|\left\|x\right\|\left\|y\right\|\leq \left\|x\right\|\left\|y\right\|,
\]
\[
\left|u_2(x,y)\right|\leq \left\|\tilde{w}\right\|\left\|C(x)\right\|\left\|D(y)\right\|\leq \left\|C(x)\right\|\left\|D(y)\right\|,
\]
\[
\left|u_3(x,y)\right|\leq \left\|\tilde{w}\right\|\left\|E(x)\right\|\left\|F(y)\right\|\leq \left\|E(x)\right\|\left\|F(y)\right\|.
\]

\section{Decomposition of bilinear forms into n terms}\label{SectIdeale}

\noindent

%\huge%\documentclass{article}
%\mdseries
%\usepackage{amsmath}
%\usepackage{amssymb}
%\usepackage{theorem}
%\newtheorem{definition}{Defition.}[section]
%\newtheorem{proposition}[definition]{Proposition.}
%\newtheorem{theorem}[definition]{Theorem.}
%\newtheorem{corollary}[definition]{Corollary.}
%\newtheorem{lemma}[definition]{lemma.}
%\newtheorem{proof}[definition]{proof.}
%\newcommand{\qed}{\hfill\hbox{$\square$}}
%\begin{document}

In this section, we discuss a problem of decomposing into n bounded terms. Turning to the finite dimensional case, we find a criterion to make the decomposition possible for n terms. We will work with sesquilinear forms instead of bilinear ones. We begin with a lemma which provides a bijective correspondence between bounded operators on $H$ and bounded sesquilinear forms. This is well known, see \cite{P89}.
\begin{lemma}\cite{P89}\label{bijective}
There is a bijective correspondence $A\longmapsto b_A$ between bounded operators on $H$ and bounded sesquilinear forms given by 
\[
b_A(x,y)=\left\langle Ax|y\right\rangle x,y \in H.
\]
One has 
\[
\left\|A\right\|=sup\left\{\left|b_A(x|y)\right||
\left\|x\right\|,\left\|y\right\|\leq 1\right\}
\]
\end{lemma}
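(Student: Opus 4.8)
The plan is to verify three things in turn: that each $A \in B(H)$ genuinely produces a bounded sesquilinear form $b_A$, that the displayed norm identity holds, and that $A \mapsto b_A$ is a bijection onto the space of all bounded sesquilinear forms. First I would check directly from the definition $b_A(x,y) = \langle Ax | y\rangle$ that $b_A$ is linear in the first slot and conjugate-linear in the second, using linearity of $A$ together with the corresponding properties of the inner product. Boundedness then follows from Cauchy--Schwarz, since $|b_A(x,y)| = |\langle Ax|y\rangle| \le \|Ax\|\,\|y\| \le \|A\|\,\|x\|\,\|y\|$. In particular $\sup\{|b_A(x,y)| : \|x\|, \|y\| \le 1\} \le \|A\|$.

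For the reverse inequality, and hence the norm identity, I would exploit the equality case in Cauchy--Schwarz: fixing $x$ with $Ax \ne 0$ and choosing the unit vector $y = Ax/\|Ax\|$ gives $b_A(x,y) = \|Ax\|$, so the supremum over admissible $y$ already recovers $\|Ax\|$, and taking the supremum over $\|x\| \le 1$ recovers $\|A\|$. This yields $\|A\| \le \sup\{|b_A(x,y)| : \|x\|,\|y\|\le 1\}$, so the two quantities agree.

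The substance of the lemma is the bijectivity, and here the key tool is the Riesz representation theorem. Injectivity is immediate: if $b_A = b_{A'}$ then $\langle (A - A')x \,|\, y \rangle = 0$ for all $x, y$, which forces $A = A'$. For surjectivity, given an arbitrary bounded sesquilinear form $b$, I would fix $x$ and observe that $y \mapsto \overline{b(x,y)}$ is a bounded linear functional on $H$; Riesz then yields a unique vector, which I call $Ax$, with $b(x,y) = \langle Ax | y\rangle$ for all $y$. The main work---and the step I expect to be the real obstacle---is verifying that the resulting assignment $x \mapsto Ax$ is linear and bounded. Linearity follows from the uniqueness clause in the Riesz representation together with linearity of $b$ in its first argument, while the estimate $\|Ax\| = \sup_{\|y\|\le 1}|\langle Ax|y\rangle| = \sup_{\|y\|\le 1}|b(x,y)| \le \|b\|\,\|x\|$ shows $A \in B(H)$. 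Since $b_A = b$ holds by construction, this completes the proof.
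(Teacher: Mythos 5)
Your proof is correct and complete: the paper itself gives no argument for this lemma, citing Pedersen's \emph{Analysis Now} instead, and your Riesz-representation argument (Cauchy--Schwarz for one inequality, the choice $y = Ax/\left\|Ax\right\|$ for the other, and uniqueness in Riesz for linearity of the recovered operator) is exactly the standard proof that citation refers to. Nothing is missing.
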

Now we come to the main theorem in this section.
\begin{theorem}\label{ch5}
Let $H$ be a finite-dimensional Hilbert space and let $A_2,\dots,A_n$ and $B_2,\ldots, B_n$ be invertible operators in $B(H)$. Assume that $U\in B(H)$ is a bounded operator which satisfies
\[
\left|\left\langle Ux|y\right\rangle\right|\leq\left\|x\right\|\ \left\|y\right\|+\left\|A_2x\right\|\left\|B_2y\right\|+\ldots+
\left\|A_nx\right\|\left\|B_ny\right\|
\]
for all $x,y\in H$.
 
Then the following two conditions are equivalent:
\begin{itemize}
\item[(a)]
$U$ can be split into a sum of n-terms 
\[
U=U_1+U_2+\ldots+U_n,\;U_i\in B(H),
\]
such that
\begin{align*}
\left|\left\langle U_1x|y\right\rangle\right| &\leq \left\|x\right\|\left\|y\right\|,\\
\left|\left\langle U_2x|y\right\rangle\right| &\leq \left\|A_2x\right\|\left\|B_2y\right\|,\\
&\hspace{.5em}\vdots\\
\left|\left\langle U_nx|y\right\rangle\right| &\leq \left\|A_nx\right\|\left\|B_ny\right\|
\end{align*}
for all $ x,y\in H$.

\item[(b)]
If we set 
\[
K=\left\{x\underline{\otimes}y:\:\left\|x\right\|\left\|y\right\|+\left\|A_2x\right\|\left\|B_2y\right\|+\cdots+\left\|A_nx\right\|\left\|B_ny\right\|\leq 1\right\}
\]
(where $x\underline{\otimes}y$ denotes a rank one operator, see the appendix)
and  
\[
\Delta=\left\{T\in B(H):\;\left\|T\right\|_1+\left\|A_2TB_2^*\right\|_1+\cdots+\left\|A_nTB_n^*\right\|_1\leq 1\right\},
\]
(where $\left\|S\right\|_1=tr\left|S\right|$ denotes the trace class norm of $S$), then
\[
conv(K)=\Delta
\]
(conv denotes the convex hull).
\end{itemize}
\end{theorem}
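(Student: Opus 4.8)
The plan is to recast both conditions in the language of trace duality and then read off the equivalence from the bipolar theorem. Throughout I identify $B(H)$ with the dual of $(B(H),\left\|\cdot\right\|_1)$ via the pairing $\langle U,T\rangle=\operatorname{tr}(UT)$, and for a subset $S$ I write $S^\circ$ for its (absolute) polar $\{T:\left|\operatorname{tr}(UT)\right|\le 1 \text{ for all }U\in S\}$. The first task is to record the rank-one identities: for $R=x\underline{\otimes}y$ one has $\left\|R\right\|_1=\left\|x\right\|\left\|y\right\|$ and $\left\|ARB^*\right\|_1=\left\|Ax\right\|\left\|By\right\|$, while $\operatorname{tr}(UR)=\langle Ux|y\rangle$ (up to a harmless conjugation, via Lemma \ref{bijective}). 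Consequently $K$ is exactly the set of rank-one operators lying in $\Delta$, so $K\subseteq\Delta$ and, $\Delta$ being convex, $\operatorname{conv}(K)\subseteq\Delta$ automatically.

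Next I reformulate the two conditions as polar statements. The majorization hypothesis on $U$ says precisely that $\left|\operatorname{tr}(UR)\right|\le 1$ for every $R\in K$, i.e. $U\in K^\circ$. For the decomposition, Lemma \ref{bijective} turns $\left|\langle U_1x|y\rangle\right|\le\left\|x\right\|\left\|y\right\|$ into $\left\|U_1\right\|\le 1$, while invertibility of the $A_i,B_i$ lets me substitute $x\mapsto A_i^{-1}x$, $y\mapsto B_i^{-1}y$ and rewrite $\left|\langle U_ix|y\rangle\right|\le\left\|A_ix\right\|\left\|B_iy\right\|$ as $\left\|(B_i^*)^{-1}U_iA_i^{-1}\right\|\le 1$, that is, $U_i=B_i^*V_iA_i$ with $\left\|V_i\right\|\le 1$. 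Hence the set $D$ of operators admitting such a decomposition is the Minkowski sum of the operator-norm unit ball $C_1$ and its images $B_i^*C_1A_i$. Computing the polar of this sum --- using $\sup_{\left\|V\right\|\le1}\left|\operatorname{tr}(VATB^*)\right|=\left\|ATB^*\right\|_1$ together with a phase-alignment argument that converts the supremum of $\left|\sum_i\operatorname{tr}(U_iT)\right|$ over independently chosen $U_i$ into $\sum_i\sup\left|\operatorname{tr}(U_iT)\right|$ --- yields exactly $D^\circ=\Delta$. Since $D$ is a finite Minkowski sum of compact, absolutely convex sets it is itself compact and absolutely convex, so the bipolar theorem gives $\Delta^\circ=D$; that is, $U$ is decomposable in the sense of (a) if and only if $U\in\Delta^\circ$.

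With these identifications the equivalence becomes pure convex duality. Because $K$, hence $\operatorname{conv}(K)$, is compact and both $\operatorname{conv}(K)$ and $\Delta$ are closed, convex and balanced, the bipolar theorem applies to each, so $\operatorname{conv}(K)=\Delta$ if and only if $K^\circ=\Delta^\circ$, i.e. if and only if the class of operators satisfying the majorization coincides with the class of decomposable operators. As noted above one always has $\operatorname{conv}(K)\subseteq\Delta$, equivalently $\Delta^\circ\subseteq K^\circ$ (every decomposable $U$ is automatically majorized by the triangle inequality), so the equality $K^\circ=\Delta^\circ$ holds exactly when every $U$ satisfying the majorization is decomposable. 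This is the asserted equivalence of (a) and (b), read --- as the standing hypothesis forces --- over all $U$ subject to the estimate: if (b) holds the polars coincide and (a) follows for every such $U$, whereas if $\operatorname{conv}(K)\subsetneq\Delta$ the bipolar theorem forces $K^\circ\supsetneq\Delta^\circ$, producing a majorized but non-decomposable $U$ and thereby breaking (a).

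I expect the main obstacle to be the explicit polar computation $D^\circ=\Delta$: pinning down the correct rank-one and trace-norm duality identities and justifying the phase-alignment step (that the supremum of $\left|\sum_i\operatorname{tr}(U_iT)\right|$ over the independent balanced sets $C_1$ and $B_i^*C_1A_i$ equals $\left\|T\right\|_1+\sum_i\left\|A_iTB_i^*\right\|_1$), together with verifying the compactness needed for a clean application of the bipolar theorem. The conventions surrounding $x\underline{\otimes}y$ and the conjugate Hilbert space also need care, so that the trace pairing reproduces $\langle Ux|y\rangle$ exactly.
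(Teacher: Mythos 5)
Your proposal is correct, but it takes a genuinely different route from the paper's. The paper proves the two implications separately and asymmetrically: for $(b)\Rightarrow(a)$ it represents $U$ as a trace functional, uses $\mathrm{conv}(K)=\Delta$ to get the norm-one estimate on the span of the tuples $(x\otimes y,A_2x\otimes B_2y,\ldots,A_nx\otimes B_ny)$ inside an $\oplus_1$-direct sum, and then produces the $U_i$ by a Hahn--Banach \emph{extension} (the same device as in its two-term Lemma); for $(a)\Rightarrow(b)$ it picks $T_0\in\Delta\setminus\mathrm{conv}(K)$, \emph{separates} it by Hahn--Banach to obtain a majorized $U$ with $|\mathrm{Tr}(UT_0)|>1$, and contradicts this via the estimates $|\mathrm{Tr}(U_iT_0)|\le\|A_iT_0B_i^*\|_1$ obtained by writing $U_i=B_i^*V_iA_i$ with $\|V_i\|\le 1$. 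You instead package everything into one bipolar computation: the decomposable operators form the Minkowski sum $D=C_1+\sum_iB_i^*C_1A_i$, the phase-alignment argument gives $D^\circ=\Delta$ (this is exactly where the paper's trace-duality estimates reappear), compactness of $D$ in finite dimensions gives $\Delta^\circ=D$, and the equivalence reads off from $\mathrm{conv}(K)=\Delta\Leftrightarrow K^\circ=\Delta^\circ$. Two remarks. First, your explicit observation that $(a)$ must be read universally over all majorized $U$ is not pedantry: as literally stated the theorem fixes one $U$, and the paper's own proof of $(a)\Rightarrow(b)$ tacitly invokes $(a)$ for the \emph{newly constructed} separating $U$, so your reading matches what the paper actually proves, and your version makes the logical structure honest. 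Second, the trade-off: your argument buys symmetry and an unconditional characterization (decomposability $\Leftrightarrow$ $U\in\Delta^\circ$, with no reference to $(b)$), at the price of relying on finite-dimensional compactness of $D$ and $\mathrm{conv}(K)$ (Carath\'eodory plus closedness of the rank-$\le 1$ set, which you should verify is representation-independent, as it is since $\|x\|\|y\|+\sum_i\|A_ix\|\|B_iy\|$ depends only on the operator $x\underline{\otimes}y$); the paper's extension argument for $(b)\Rightarrow(a)$ avoids any compactness of the decomposable set and is the variant that transports more directly to infinite dimensions, as in its Section 1. The loose ends you flag (the identity $\sup_{\|V\|\le1}|\mathrm{tr}(VA_iTB_i^*)|=\|A_iTB_i^*\|_1$, balancedness justifying phase alignment, and the convention $\mathrm{tr}(U(x\underline{\otimes}y))=\langle Ux|y\rangle$, which holds exactly with the appendix's conventions, the paper itself being inconsistent about the conjugate) are all routine and close as you expect.
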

\begin{proof}
$(b)\Rightarrow(a)$: For any bounded operator $U\in B(H)$, by trace duality, see \cite{D95}. We can associate a linear functional $\phi$ on $B(H)$ such that, 
\[
\phi(T)=tr(UT),\text{ }T\in B(H).
\]
Hence,
\[
\phi(x\underline{\otimes}y)=tr(Ux\underline{\otimes}y)=\left\langle Ux|y\right\rangle,
\]
and therefore,
\[
\left|\left\langle Ux|y\right\rangle\right|=\left|\phi(x\underline{\otimes}y)\right|\leq\left\|x\underline{\otimes} y\right\|_1+\left\|A_2x \underline{\otimes} B_2y\right\|_1+....+ \left\|A_nx \underline{\otimes}B_ny\right\|_1\leq 1
\]
for all $x\underline{\otimes}y\in K$. By assumption,
\[
conv(K)=\Delta.
\]
So any $T\in \Delta$ has the form $T=\sum_{i=1}^n \lambda_ix_i\underline{\otimes} y_i$ where,
\[
\sum_{i=1}^n\lambda_i=1 \text{ and } x_i \underline{\otimes}y_i\in K,
\]

Therefore,
\[
\left|\sum_{i=1}^n\left\langle Ux_i|y_i\right\rangle\right|=\left|\phi(\sum_{i=1}^n x_i\underline{\otimes}y_i)\right|=\left|\phi(\sum_{i=1}^n \lambda_ix_i\underline{\otimes} y_i)\right|=\left|\sum_{i=1}^n\lambda_i\left\langle Ux_i|y_i\right\rangle\right|
\]
\[
\leq \sum_{i=1}^n\lambda_i\left|\left\langle Ux_i|y_i\right\rangle\right|\leq \sum_{i=1}^n\lambda_i\left[\left\|x_i\underline{\otimes} y_i\right\|_1+\left\|A_2x_i\underline{\otimes} B_2y_i\right\|_1+...+\left\|A_nx_i\underline{\otimes}B_ny_i\right\|_1\right]
\]
\begin{equation}\label{lambda}
\leq \sum_{i=1}^n\lambda_i= 1.
\end{equation}
Let $E=span\left\{(x\otimes y,A_2x\otimes B_2y,\ldots,A_nx\otimes B_ny)\;|\;x,y\in H\right\}\subseteq H\otimes H\oplus H\otimes H\oplus\ldots\oplus H\otimes H$.

By \eqref{lambda}, we can find a bounded linear functional $\phi$ on $E$ with
\[
\left\|\phi\right\|=sup\left\{\left|\phi(t)\right|/ \left\|t\right\|\;|\;t\in E,t\neq 0\right\}
\]
\[
=sup\left\{\left|\phi(t)\right|\;|\;t\in E,\left\|t\right\|\leq 1\right\}
\]
\[
\leq 1,
\]
such that,
\[
\left\langle Ux|y\right\rangle=\phi((x\otimes y,A_2x\otimes B_2y,\ldots,A_nx\otimes B_ny)),\text{ }x,y\in H.
\]
Hence by the Hahn-Banach Theorem there is an extension $\widetilde{\phi}$ of $\phi$ to all $H\otimes H\oplus H\otimes H\oplus\ldots\oplus H\otimes H$ with 
$\left\|\widetilde{\phi}\right\|=\left\|\phi\right\|\leq 1$.
If we set,
\[
\left\langle U_1x|y\right\rangle=\widetilde{\phi}((x\otimes y,0,\cdots,0)),
\]
\[
\left\langle U_2x|y\right\rangle=\widetilde{\phi}(0,A_2(x)\otimes B_2(y),\cdots,0),
\]
\[
\vdots
\]
\[
\left\langle U_nx|y\right\rangle=\widetilde{\phi}(0,0,\cdots,A_n(x)\otimes B_n(y)),
\]
then by construction we have
\[
U=U_1+U_2+\cdots+U_n
\]
and
\[
\left|\left\langle U_1x|y\right\rangle\right| \leq \left\|\widetilde{\phi}\right\|\left\|x\right\|\left\|y\right\|\leq \left\|x\right\|\left\|y\right\|,
\]
\[
\left|\left\langle U_2x|y\right\rangle\right| \leq \left\|\widetilde{\phi}\right\|\left\|A_2x\right\|\left\|B_2y\right\|\leq \left\|A_2x\right\|\left\|B_2y\right\|,
\]
\[
\vdots
\]
\[
\left|\left\langle U_nx|y\right\rangle\right| \leq \left\|\widetilde{\phi}\right\|\left\|A_nx\right\|\left\|B_ny\right\|\leq \left\|A_nx\right\|\left\|B_ny\right\|.
\]
\item $(a)\Rightarrow(b)$: Assume $(a)$. If $(b)$ does not hold, we can choose $T_0\in \Delta\setminus conv(K)$. Since $conv(K)$ is closed, there exists by the Hahn-Banach Theorem a functional $\phi$ on $T(H)=B(H)^*$, such that 
\[
sup\left\{Re\phi(T)|\,T\in conv(K)\right\} < Re\;\phi(T_0).
\]
Since $T\in K\Rightarrow\gamma T\in K$ for all $\gamma \in \C$ with $\left|\gamma\right|=1$, we have
\[
sup\left\{(Re\phi(T))|{\;T\in conv(K)}\right\}=sup\left\{\left|\phi(T)\right||{\;T\in conv(K)}\right\} \geq 0.
\]
Moreover, 
\[
Re\;\phi(T_0)\leq\left|\phi(T_0)\right|.
\]
Hence,
\[
sup\left\{\left|\phi(T)\right||{\;T\in conv(K)}\right\}<\left|\phi(T_0)\right|.
\]
By replacing $\phi$ by a positive multiple of $\phi$ we can without loss of generality, assume that 
\begin{equation}\label{convex}
sup\left\{\left|\phi(T)\right||{\;T\in conv(K)}\right\}\leq 1 <\left|\phi(T_0)\right|.
\end{equation}
Using the standard duality $T(H)^*=B(H)$ there is a unique $U\in B(H)$, such that 
\[
\phi(T)=Tr(UT), \forall T\in T(H).
\]
By \eqref{convex}, we have for $x,y \in H$ satisfying 
\begin{equation}\label{crozy}
\left\|x\right\|\left\|y\right\|+\left\|A_2x\right\|\left\|B_2y\right\|+\ldots+\left\|A_nx\right\|\left\|B_ny\right\|=1,
\end{equation}
that $T=x\underline{\otimes}\bar{y}\in K$ and thus 
\begin{equation}\label{thus}
\left|\left\langle Ux|y\right\rangle\right|=\left|Tr(U(x\underline{\otimes}\bar{y}))\right|=\left|\phi(x\otimes \bar{y})\right|\leq 1.
\end{equation}
and since $\eqref{crozy}\Rightarrow\eqref{thus}$ we have by linearity in $x$, that 
\[
\left|\left\langle Ux|y\right\rangle\right|\leq \left\|x\right\|\left\|y\right\|+\left\|A_2x\right\|\left\|B_2x\right\|+\ldots+\left\|A_ny\right\|\left\|B_ny\right\|,\forall x,y\in H.
\]
However,
\begin{equation}\label{however}
\left|Tr(UT_0)\right|=\left|\phi(T_0)\right|> 1.			
\end{equation}
Now by (a), $U$ has a decomposition:
\[
U=U_1+U_2+\cdots+U_n.
\]
Therefore
\begin{equation}\label{final}
\left|Tr(UT_0)\right|\leq \left|Tr(U_1T_0)\right|+\left|Tr(U_2T_0)\right|+\ldots+\left|Tr(U_nT_0)\right|
\end{equation}

also from (a):
\[
\left|\left\langle U_1x|y\right\rangle \right| \leq \left\|x\right\|\left\|y\right\|
\]
for all $x,y$ in $H$.

Hence
\[
\left\|U_1\right\|=sup\left\{\left|\left\langle U_1x|y\right\rangle\right|:\left\|x\right\|\leq 1,\left\|y\right\|\leq 1\right\}\leq 1.
\]
Now by using Theorem 1.51(e), we get
\begin{equation}
\left|Tr(U_1T_0)\right|\leq \left\|U_1\right\|\left\|T_0\right\|_1\leq \left\|T_0\right\|_1
\end{equation}
where $U_1\in B(H)$ and $T_0\in L^1(H)$.

For the second term i.e. $\left|Tr(U_2T_0)\right|$ in \eqref{final}, if we define a new sesquilinear form $v$ as
\[
v(x,y):=u_2(A^{-1}_{2}x,B^{-1}_{2}y);
\]
then from Lemma \ref{bijective} there is $V\in B(H)$ such that 
\[
v(x,y)=u_2(A^{-1}_{2}x,B^{-1}_{2}y)=\left\langle Vx|y\right\rangle
\]
for all $x,y \in H$.

Therefore also by Lemma \ref{bijective}, there is $U_2\in B(H)$ satisfying
\[
u_2(x,y)=\left\langle U_2x|y\right\rangle=v(A_2x,B_2y)=\left\langle VA_2x|B_2y\right\rangle
\]
for all $x,y \in H$.

Hence,
\begin{align*}
Tr(U_2T_0)&=Tr(\sum U_2x_j\underline{\otimes} y_j)\\
&=(\sum \left\langle U_2x_j|y_j\right\rangle)\\
&=\sum\left\langle VA_2x_j|B_2y_j\right\rangle\\
&=Tr\left(\sum VA_2x_j\underline{\otimes} B_2y_j\right)\\
&=Tr\left(V\left[\sum A_2x_j\underline{\otimes} B_2y_j\right]\right)\\
&= Tr (VA_2T_0B_2^*).
\end{align*}
Therefore,
\[
\left|Tr(U_2T_0)\right|=\left|Tr(VA_2T_0B_2^*)\right|\leq \left\|V\right\|\left\|A_2T_0B_2^*\right\|_1.
\]
From the definition of $v$, we can easily get that
\[
\left\|V\right\|=sup\left\{\left|\left\langle VA_2x|B_2y\right\rangle\right|:\left\|A_2x\right\|\leq 1,\left\|B_2y\right\|\leq 1\right\}
\]
\[
=sup\left\{\left|\left\langle U_2x|y\right\rangle\right|:\left\|A_2x\right\|\leq 1,\left\|B_2y\right\|\leq 1\right\}\leq 1
\]
(where we use $\left|\left\langle U_2x|y\right\rangle\right|\leq\left\|A_2x\right\|\left\|B_2y\right\|$, from (a)).

Thus,
\begin{equation}
\left|Tr(U_2T_0)\right|\leq\left\|A_2T_0B_2^*\right\|_1.
\end{equation}

Similarly for the rest of the terms (for $n\geq 3$) in the above inequality \eqref{final}. We can define a new sesquilinear form $w$ by,
\[
w(x,y):=u_n(A_n^-1x,B_n^-1y).
\]
Also, from Lemma \ref{bijective} there is $W\in B(H)$ such that,
\[
w(x,y)=u_n(A_n^-1x,B_n^-1y)=\left\langle Wx|y\right\rangle
\]
for all $x,y\in H$.

So, also by Lemma \ref{bijective} there is $U_n\in B(H)$ such that 

\[
\left\langle U_nx|y\right\rangle=u_n(x,y)=w(A_nx,B_ny)=\left\langle WA_nx|B_ny\right\rangle.
\]
Hence,
\begin{align*}
Tr(U_nT_0)&=Tr(\sum U_nx_j\underline{\otimes} y_j)\\
&=(\sum \left\langle U_nx_j|y_j\right\rangle)\\
&=\sum\left\langle WA_nx_j|B_ny_j\right\rangle\\
&=Tr\left(\sum WA_nx_j\underline{\otimes}B_ny_j\right)\\
&=Tr\left(W\left[\sum A_nx_j\underline{\otimes}B_ny_j\right]\right)\\
&= Tr (WA_nT_0B_n^*).
\end{align*}
Therefore,
\[
\left|Tr(U_nT_0)\right|=\left|Tr(WA_nT_0B_n^*)\right|\leq \left\|W\right\|\left\|A_nT_0B_n^*\right\|_1.
\]
Also from the definition of $w$, we can easily see that
\[
\left\|W\right\|=sup\left\{\left|\left\langle WA_nx|B_ny\right\rangle\right|:\left\|A_nx\right\|\leq 1,\left\|B_ny\right\|\leq 1\right\}
\]
\[
=sup\left\{\left|\left\langle U_nx|y\right\rangle\right|:\left\|A_nx\right\|\leq 1,\left\|B_ny\right\|\leq 1\right\}\leq 1,
\]
(where we use $\left|\left\langle U_nx|y\right\rangle\right| \leq \left\|A_nx\right\|\left\|B_ny\right\|$, from (a)).

Thus,
\begin{equation}
\left|Tr(U_nT_0)\right|\leq\left\|A_nT_0B_n^*\right\|_1.
\end{equation}
Finally from inequality \eqref{final},
\[
\left|Tr(UT_0)\right|\leq \left|Tr(U_1T_0)\right|+\left|Tr(U_2T_0)\right|+\ldots+\left|Tr(U_nT_0)\right|
\]
\[
\leq \left\|T_0\right\|_1+\left\|A_2T_0B_2^*\right\|_1+\ldots+\left\|A_nT_0B_n^*\right\|_1\leq 1.
\]
But this contradicts \eqref{however}. Thus $(a)$ does not hold and we have proved that $(a)\Rightarrow(b)$.
\end{proof}
%\end{document}

%We prove this relation by using the facts, that $x_i$ is a partial isometry and $\alpha_0=\textnormal{id}$ (Recall also, that $x_i\alpha_i(a)=\alpha_i(a)x_i$ in $A\otimes_{\textnormal{id}}\K$):
%\begin{align*}
% \tilde\alpha(a)\pi(x_i)&=\tilde\alpha(a)\pi(x_ix_i^*x_i)=\tilde\alpha_i(a)\pi(x_ix_i^*)\pi(x_i)=\pi(\alpha_i(a)x_ix_i^*x_i) \\
%&=\pi(\alpha_i(a)x_i) \\
% \pi(x_i)\tilde\alpha(\alpha_i(a))&=\pi(x_ix_0)\tilde\alpha(\alpha_i(a))=\pi(x_i)\tilde\alpha_0(\alpha_i(a))=\pi(x_i\alpha_i(a)x_0)\\
%&=\pi(\alpha_i(a)x_i)
%\end{align*}

%XXXZeige, dass $\langle (1-uu^*)(1-vv^*)\rangle\subset I\subset\Atens$ falls $\theta$ irrat. Also erstens $\Atens$ isomorph zu $C^*(\tilde u, \tilde v)$ falls $(1-\tilde u\tilde u^*)(1-\tilde v\tilde v^*)\neq 0$ und zweitens also isomorph zu $C^*(Sd(\lambda), S)$, wobei $S$ der unilaterale Shift. Dh. gute injektive Darstellung gefunden -- f\"ur $\Afrei$ nicht so einfach.XXX

\section{A counterexample to decomposing bilnear forms into three bilinear forms} \label{SectJ}

\noindent

In this section, we will use the criterion established in the previous section

  to give a counterexample which will show that the decomposition of a bilinear form into three bounded terms is not always possible. We will start with a useful lemma.

\begin{lemma}\label{ch6}
Let $H$ be a finite dimensional Hilbert space. If $S$ is in $T(H)_+=B(H)_+$ and 
\[
S=\sum_{j=1}^m x_j\underline{\otimes} y_j 
\]
is a finite rank operator on $H$, (see the appendix for the definition) 
such that
\begin{equation}\label{Fin}
\left\|S\right\|_1=\sum_{j=1}^m\left\|x_j\right\|\left\|y_j\right\| 
\end{equation}
then each $y_j$ is a positive multiple of $x_j$.
\end{lemma}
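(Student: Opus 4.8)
The plan is to exploit the positivity of $S$ in order to convert the trace-class norm into an ordinary trace, and then to read off the conclusion from the equality case of the Cauchy--Schwarz inequality applied term by term.

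First I would record two elementary facts. Since $H$ is finite dimensional every operator is trace class, and because $S\in B(H)_+$ is positive its trace-class norm is just its trace: $\|S\|_1=tr|S|=tr(S)$. Second, for rank-one operators I use the trace formula $tr(x\underline{\otimes}y)=\langle x|y\rangle$; this is the special case $U=I$ of the identity $tr(U(x\underline{\otimes}y))=\langle Ux|y\rangle$ already employed in the proof of Theorem \ref{ch5}. By linearity of the trace and the representation of $S$,
\[
tr(S)=\sum_{j=1}^m tr(x_j\underline{\otimes}y_j)=\sum_{j=1}^m\langle x_j|y_j\rangle.
\]

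Next I would combine these with the hypothesis \eqref{Fin}. Chaining the identities gives
\[
\sum_{j=1}^m\langle x_j|y_j\rangle=tr(S)=\|S\|_1=\sum_{j=1}^m\|x_j\|\,\|y_j\|,
\]
whose right-hand side is real and non-negative. Taking real parts and invoking $\operatorname{Re}\langle x_j|y_j\rangle\le|\langle x_j|y_j\rangle|\le\|x_j\|\,\|y_j\|$ term by term, I obtain $\sum_j\big(\|x_j\|\,\|y_j\|-\operatorname{Re}\langle x_j|y_j\rangle\big)=0$ with every summand non-negative. Hence each summand vanishes, which forces equality throughout the chain, i.e. $\langle x_j|y_j\rangle=\|x_j\|\,\|y_j\|$ for each $j$. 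Discarding the trivial terms with $x_j=0$ (for which $x_j\underline{\otimes}y_j=0$), the relation $|\langle x_j|y_j\rangle|=\|x_j\|\,\|y_j\|$ is the equality case of Cauchy--Schwarz, so $y_j=c_jx_j$ for some scalar $c_j$; substituting back, $\langle x_j|y_j\rangle=\overline{c_j}\|x_j\|^2$ must equal $|c_j|\,\|x_j\|^2$, which forces $\overline{c_j}=|c_j|$, that is $c_j\ge 0$. Thus each $y_j$ is a positive multiple of $x_j$.

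This argument is short, so the main points requiring care are not computational but conceptual. The first is fixing the trace convention so that $tr(x\underline{\otimes}y)=\langle x|y\rangle$ rather than $\langle y|x\rangle$, since the wrong convention would interchange the roles of $x_j$ and $y_j$. The second, and the genuine crux, is the passage from the complex inner products to their real parts: it is this step that lets the equality case pin down the \emph{sign} of $c_j$ and deliver a \emph{positive} multiple, not merely parallelism. The essential mechanism---turning a single trace-norm equality into a family of termwise Cauchy--Schwarz equalities---is exactly where the positivity of $S$ is indispensable, and I do not expect any further obstacle.
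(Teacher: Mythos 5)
Your proof is correct. In fact there is nothing in the paper to compare it against: Lemma \ref{ch6} is stated and immediately applied in the counterexample of Section 3, but no proof is given, so your argument fills a genuine gap. It is also the natural argument: positivity of $S$ gives $\left\|S\right\|_1=tr(S)$, linearity of the trace together with the paper's convention $(g\underline{\otimes}h)(f)=\left\langle f|h\right\rangle g$ (whence $tr(g\underline{\otimes}h)=\left\langle g|h\right\rangle$, consistent with the identity $tr(U(x\underline{\otimes}y))=\left\langle Ux|y\right\rangle$ used in Theorem \ref{ch5}) gives $tr(S)=\sum_j\left\langle x_j|y_j\right\rangle$, and comparing with \eqref{Fin} via real parts forces termwise equality in Cauchy--Schwarz; the real-part step correctly pins down the sign of the scalar and not merely proportionality. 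One pedantic caveat, which you partly flag yourself: as literally stated the lemma fails for degenerate terms (take $x_j=0$, $y_j\neq 0$: then $x_j\underline{\otimes}y_j=0$ and \eqref{Fin} can still hold, yet $y_j$ is no multiple of $x_j$; likewise $y_j=0$ yields only the nonnegative multiple $c_j=0$). Your move of discarding such terms is the right reading, and it is harmless for the paper's application, where zero terms contribute nothing to the representation of $T$ and the eigenvector argument only needs the nontrivial terms.
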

\subsection{A counterexample}
We use Theorem \ref{ch5} to build the counterexample. We prove that the condition
\[
conv(K)=\Delta
\]
with
\[
K=\left\{x\underline{\otimes}y:\;\left\|x\right\|\ \left\|y\right\|+\left\|Ax\right\|\left\|By\right\|+ \left\|Cx\right\|\left\|Dy\right\|\leq 1\right\}
\]
and
\[
\Delta=\left\{T\in B(H):\;\left\|T\right\|_1+\left\|ATB^*\right\|_1+\left\|CTD^*\right\|_1\leq 1\right\}
\]
is not always true. Therefore the decomposition into three terms fails. Consider the Hilbert space $H=\C^2$. Consider the operators $B=D=I$ and $A,C$ positive invertible and not commuting. In particular $A$ and $C$ don't have any common eigenvectors. For instance we could choose
\[
A=E^2=
\left[ {\begin{matrix}
\sqrt{2} & 0 \\ 
0 & 1
\end{matrix} } \right]
\left[\begin{matrix}
\sqrt{2} & 0 \\ 
0 & 1
\end{matrix}\right]
=
\left[ {\begin{matrix}
2 & 0 \\
0 & 1
\end{matrix} }\right]
\]
and
\[
C=F^*F=
\left[ {\begin{matrix}
1 & 1 \\ 
0 & 1
\end{matrix} } \right]
\left[\begin{matrix}
1 & 0 \\ 
1 & 1
\end{matrix}\right]
=
\left[ {\begin{matrix}
2 & 1 \\
1 & 1
\end{matrix} }\right]
\]
Put $c:=\,(\|1\|_1 + \|A\|_1 + \|C\|_1)^{-1}$ and take $T=c1$.
Now we will show that $T\in \Delta$ but $T\notin \mbox{conv}K$. Let $U\in B(H)$ such that 
\[
U=
\left[\begin{matrix}
5 & 1 \\ 
1 & 3
\end{matrix}\right].
\]
Consider $u$ the corresponding sesquilinear form, then
\[
u(x,y):=\left\langle Ux|y\right\rangle.
\]
Now,
\[
Ax=\left[ {\begin{matrix}
2x_1\\
x_2
\end{matrix} }\right],\;
By=Iy=\left[ {\begin{matrix}
y_1\\
y_2
\end{matrix} }\right]
\]
and
\[
Cx=
\left[ {\begin{matrix}
2x_1+x_2\\
x_1+x_2
\end{matrix} }\right],\:
Dy=Iy=
\left[ {\begin{matrix}
y_1\\
y_2
\end{matrix} }\right].
\]
Then,
\[
\left|u(x,y)\right|=\left|5x_1y_1+x_2y_1+x_1y_2+3x_2y_2\right|
\]
\[
=\left|x_1y_1+x_2y_2+2x_1y_1+x_2y_2+(2x_1+x_2)y_1+(x_1+x_2)y_2\right|
\]
\[
\leq \left\|x\right\|\left\|y\right\|+\left\|Ax\right\|\left\|By\right\|+\left\|Cx\right\|\left\|Dy\right\|.
\]
From the definition of $\Delta$,
\[
\Delta=\left\{T\in B(H)|\left\|T\right\|_1+\left\|ATB^*\right\|_1+\left\|CTD^*\right\|_1\leq 1\right\}.
\]
For $T=c1$, we find
\begin{align*}
\left\|T\right\|_1+\left\|ATB^*\right\|_1+\left\|CTD^*\right\|_1&=c\left\|1\right\|_1+c\left\|A.1\right\|_1+c\left\|C.1\right\|_1\\
&=c\left(\left\|1\right\|_1+\left\|A\right\|_1+\left\|C\right\|_1\right)\\
&=(\|1\|_1 + \|A\|_1 + \|C\|_1)^{-1}\left(\left\|1\right\|_1+\left\|A\right\|_1+\left\|C\right\|_1\right)= 1
\end{align*}
Therefore,
\[
T\in \Delta.
\]
It is not difficult to see that the operators $T$, $AT$ and $CT$ are positive. In fact
\[
T=\left|T\right|=cI
\]
and 
\[
AT=A(cI)=cA\quad\text{and}\quad CT=C(cI)=cC
\]
Also we have,
\begin{align*}
N_1&:=\left\|T\right\|_1=c\left\|I\right\|_1=2c=1/4,\\
N_2&:=\left\|AT\right\|_1=c\left\|AI\right\|_1=c\left\|A\right\|_1=3/8,\\
and\;
N_3&:=\left\|CT\right\|_1=c\left\|CI\right\|_1=c\left\|C\right\|_1=3/8
\end{align*}
are of course non-zero positive numbers. Now suppose that,

\[
T=\sum_{j=1}^n\lambda_jx_j\underline{\otimes} y_j\quad \in conv(K),
\]
i.e. 
\[
\sum_{j=1}^n\lambda_j=1\quad\text{and}\quad x_j\underline{\otimes} y_j\in K\quad\text{for all}\quad1\leq j \leq n.
\]
We know $T\in \Delta$, in fact 
\[
\left\|T\right\|_1+\left\|AT\right\|_1+\left\|CT\right\|_1=1.
\]
Moreover,

\begin{equation}\label{eq1}
\left\|T\right\|_1\leq\sum_{j=1}^n\lambda_j\left\|x_j\right\|\left\|y_j\right\|:=M_1,
\end{equation}
\begin{equation}\label{eq2}
\left\|AT\right\|_1\leq\sum_{j=1}^n\lambda_j\left\|Ax_j\right\|\left\|y_j\right\|:=M_2,
\end{equation}
\begin{equation}\label{eq3}
\left\|CT\right\|_1\leq\sum_{j=1}^n\lambda_j\left\|Cx_j\right\|\left\|y_j\right\|:=M_3.
\end{equation}
Also,
\[
x_j\underline{\otimes}y_j\in K,
\]
whence
\[
\left\|x_j\right\|\left\|y_j\right\|+\left\|Ax_j\right\|\left\|y_j\right\|+\left\|Cx_j\right\|\left\|y_j\right\|\leq 1.
\]
Therefore,
\[
\lambda_j\left\|x_j\right\|\left\|y_j\right\|+\lambda_j\left\|Ax_j\right\|\left\|y_j\right\|+\lambda_j\left\|Cx_j\right\|\left\|y_j\right\|\leq\lambda_j
\]
\[
\Longrightarrow
\]
\[ \sum_{j=1}^n\lambda_j\left\|x_j\right\|\left\|y_j\right\|+\sum_{j=1}^n\lambda_j\left\|Ax_j\right\|\left\|y_j\right\|+\sum_{j=1}^n\lambda_j\left\|Cx_j\right\|\left\|y_j\right\|\leq\sum_{j=1}^n\lambda_j=1.
\]
Hence
\[
\sum_{j=1}^n\lambda_j\left\|x_j\right\|\left\|y_j\right\|+\sum_{j=1}^n\lambda_j\left\|Ax_j\right\|\left\|y_j\right\|+\sum_{j=1}^n\lambda_j\left\|Cx_j\right\|\left\|y_j\right\|=1.
\]
All the above inequalities \eqref{eq1}, \eqref{eq2} and \eqref{eq3} are equalities since the system,
\[
(M_1-N_1)+(M_2-N_2)+(M_3-N_3)=0,
\]
and
\[
N_1\leq M_1
\]
\[
N_2\leq M_2
\]
\[
N_3\leq M_3
\]
has only the trivial solution,
\[
M_1=N_1
\]
\[
M_2=N_2
\]
\[
M_3=N_3
\]
i.e.
\[
\left\|T\right\|_1=\sum_{j=1}^n\lambda_j\left\|x_j\right\|\left\|y_j\right\|,
\]
\[
\left\|AT\right\|_1=\sum_{j=1}^n\lambda_j\left\|Ax_j\right\|\left\|y_j\right\|,
\]
\[
\left\|CT\right\|_1=\sum_{j=1}^n\lambda_j\left\|Cx_j\right\|\left\|y_j\right\|.
\]
Applying Lemma \ref{ch6} to the positive operator $T$, we find

\begin{equation}\label{alpha}
y_j=\alpha_j(\lambda_jx_j)
\end{equation}
where $\alpha_j$ is a positive scalar. We can also apply Lemma \ref{ch6} to the positive operators $AT$ and $CT$ to get

\begin{equation}\label{beta}
y_j=\beta_j(\lambda_jAx_j)
\end{equation}
where $\beta_j$ is also a positive scalar. and

\begin{equation}\label{gamma}
y_j=\gamma_j(\lambda_jBx_j)
\end{equation}
for another positive scalar $\gamma_j$.

Now from \eqref{alpha}, \eqref{beta} and \eqref{gamma}, we have
\[
y_j=\lambda_j\alpha_jx_j=\lambda_j\beta_jAx_j=\lambda_j\gamma_jCx_j.
\]
Hence
\[
Ax_j=(\alpha_j/\beta_j)x_j
\]
and
\[
Cx_j=(\alpha_j/\gamma_j)x_j.
\]

Therefore $x_j$ is a common eigenvector for operators $A$ and $C$ but this contradicts our assumption on $A,C$.

Thus
\[
T\notin conv(K).
\]

\section*{Appendix}
This appendix supposed to give a brief overview on tensor products and finite rank operators (cf. \cite{C00}, \cite{R02} and \cite{D93}).\\

\noindent
\zw{1}{Rank one operator}
\label{rank} If $g,h$ are elements of a Hilbert space $H$ we define the operator $g\underline{\otimes} h$ on $H$ by 
\[
(g\underline{\otimes} h)(f)= \left\langle f|h\right\rangle g
\]
Clearly, $\left\|g\underline{\otimes} h\right\|=\left\|g\right\|\left\|h\right\|$.
$g\underline{\otimes} h$ is rank one operator if $g$ and $h$ are non-zero.
If $g$,$\acute{g},h,\acute{h}\in H$ and $T\in B(H)$, then the following equalities are readily verified:
\begin{align*}
(g\underline{\otimes} \acute{g})(h\underline{\otimes}\acute{h})&=\left\langle h|\acute{g}\right\rangle(g\underline{\otimes}\acute{h})\\
(g\underline{\otimes} h)^*&=h\underline{\otimes}g\\
T(g\underline{\otimes}h)&=Tg\underline{\otimes}h\\
(g\underline{\otimes} h)T&=g\underline{\otimes}T^*h
\end{align*}
The operator $g\underline{\otimes}g$ is a rank-one projection if and only if $\left\langle g,g\right\rangle=1$, that is, $g$ is a unit vector. Conversely, every rank-one projection is of the form $g\underline{\otimes}g$ for some unit vector $g$. Indeed, if $e_1,.....,e_n$ is an orthonormal set in $H$, then the operator $\sum^n_{j=1}e_j\underline{\otimes}e_j$ is the orthogonal projection of $H$ onto the vector subspace $\C.e_1+.....+\C.e_n$.
If $T\in B(H)$ is rank-one operator and $g$ a non-zero element of its range, then $T=g\underline{\otimes}h$ for some $h\in H$. For if $f\in H$, then $T(f)=\tau{(f)}g$ for some scalar $\tau(f)\in \C$. It is readily verified that the map $f\rightarrow \tau(f)$ is a bounded linear functional on $H$, and therefore, by the Riesz Representation Thorem, there exists $h\in H$ such that $\tau(f)=\left\langle f|h\right\rangle$ for all $f \in H$. Therefore, $T=g\underline{\otimes}h$.\\

\zw{2}{Finite rank operator}
\label{Finite} Let $H$ be Hilbert space. An operator T on $H$ is said to be of finite rank (of rank m) if $R(T)$ is finite-dimensional (m-dimensional).\\

\zw{3}{The Projective Norm}
\label{projective} Let $X$ and $Y$ be Banach spaces. How should we put a norm on the tensor product $X\otimes Y$? Consider first the elementary tensors. It is natural to require that
\[
\left\|x\otimes y\right\|\leq \left\|x\right\|\left\|y\right\|.
\]
Now let $u$ be any element of $X\otimes Y$. If $\sum_{i=1}^n x_i \otimes y_i$ is a representation of $u$, then it follows from the Triangle Inequality that the norm must satisfy 
\[
\left\|u\right\|\leq\sum_{i=1}^n\left\|x_i\right\|\left\|y_i\right\|.
\]
Since this holds for every representation of $u$, it follows that 
\[
\left\|u\right\|\leq\inf\left\{\sum_{i=1}^n\left\|x_i\right\|\left\|y_i\right\|\right\},
\]
the infimum being taken over all representations of $u$. The right hand side of this inequality is the biggest possible candidate for a ``natural" norm on $X\otimes Y$. This norm is known as the projective norm and is defined as follows:
\[
\pi(u)=inf\left\{\sum_{i=1}^{n}\left\|x_i\right\|\left\|y_i\right\|\colon u=\sum^{n}_{i=1}{x_i\otimes y_i}\right\}.
\]
If it is necessary to specify the component spaces in the tensor product, we shall denote this norm by $\pi_{X,Y}(u)$, or $\pi(u;X\otimes Y)$.\\

\zw{4}{Corollary}

For $u\in X\hat{\otimes_\pi}Y$
\[
\pi(u;X\otimes Y)=\inf\left\{\sum^\infty_{n=1}\left\|x_n\right\|\left\|y_n\right\|:\;u=\sum^\infty_{n=1}x_n\otimes y_n\right\}.
\].\\

\zw{5}{Nuclear operator}

 An operator $T:X\rightarrow Y$ between Banach spaces is called nuclear if there are $x'_i\in X'$, where $X'$ is a dual space of $X$ and $y_i\in Y$ such that $\sum^\infty_{i=1} \left\|x'_i\right\|\left\|y_i\right\|<\infty$ and for each $x\in X$,
\[
Tx=\sum_{i=1}^{\infty}\left\langle x'_i|x\right\rangle y_i.
\]
Clearly, $T$ is continuous in this case and 
\[
N(T):=\inf\left\{\sum^\infty_{i=1}\left\|x'_i\right\|\left\|y_i\right\|:\;T(\cdot)=\sum_{i=1}^{\infty}\left\langle x'_i|\cdot\right\rangle y_i.
\right\}
\]
defines a norm on the vector space $\aleph(X,Y)$ of all nuclear operators; it easy to see that $\aleph(X,Y)$ becomes a Banach space with this norm.\\

\zw{6}{The map}
The definition of $\aleph(X,Y)$, its norm and Corollary A.5 give that the map in the following theorem is surjective.
\[
J:\acute{X}\otimes_\pi Y\rightarrow \aleph(X,Y) 
\]
\[
x'\otimes y\rightarrow x'\underline{\otimes}y
\].\\

\zw{7}{remark}
If $X=Y=H$ be a Hilbert space, then $\aleph(X,Y)=B_1(H)$.\\

\zw{8}{remark}
The map in remark A.6 in case $X=Y=H$ be a Hilbert space gives an isometric isomorphism.

%\zw{4}{The Cuntz algebra}
%Let $n\geq$ be a positive integer. The \emph{Cuntz algebra} $\On{n}$ is defined as:
%\[\On{n}:=\univbig{S_1,\ldots,S_n \textnormal{ isometries}}{\sum_{i=1}^n S_iS_i^*=1}\]
%The Cuntz algebras were defined by Cuntz in 1977 (\cite{C77}).
%The extension $\mathcal{E}_n$ of $\On{n}$ by the compacts $\K$ is given by:
%\[\mathcal{E}_n:=\univbig{S_1,\ldots,S_n \textnormal{ isometries}}{S_i^*S_j=\delta_{ij}}\]
%The range projections in $\mathcal{E}_n$ are mutually orthogonal and their defect $1-\sum_{i=1}^n S_iS_i^*$ generates $\K$ as an ideal.\\

\section*{Acknowledgements}

\noindent
I am indebted to many people for their long-lasting support and encouragement which was invaluable for the successful completion of this research work. I must thank Prof. Joachim Cuntz who initiated the article and granted me the EU-scholarship under the project ``ToDyRic''. The discussions with him were very helpful and I appreciate his ideas and suggestions a lot. I am deeply grateful to Prof. Uffe Haagerup for his valuable hints. I would also like to thank the research group in Functional Analysis and Operator Algebras in M\"unster, in particular Siegfried Echterhoff, Christian Voigt and Aaron Tikuisis.

\end{document}